\documentclass[11pt,oneside,reqno]{amsart}

\usepackage[T1]{fontenc}
\usepackage{amssymb}
\usepackage{algpseudocode}
\usepackage{algorithm}
\usepackage{verbatim}
\usepackage{graphicx}
\usepackage{placeins}
\usepackage{listings}
\usepackage{xcolor}
\usepackage{subcaption}
\usepackage[noadjust]{cite}
\RequirePackage{dsfont} 
 \scrollmode
\allowdisplaybreaks
\usepackage{graphicx}
\newcommand{\TV}{\mathrm{TV}}
\usepackage{epstopdf}
\usepackage[hidelinks]{hyperref}
\usepackage{mathtools}

\hypersetup{
  pdftitle={Error and optimality of a randomized Milstein scheme under noisy information},
  pdfauthor={Paweł M. Morkisz, Paweł Przybyłowicz, Martyna Wiącek}
}

\usepackage{amsmath}
\usepackage{tikz}


\definecolor{codegreen}{rgb}{0,0.6,0}
\definecolor{codegray}{rgb}{0.5,0.5,0.5}
\definecolor{codepurple}{rgb}{0.58,0,0.82}
\definecolor{backcolour}{rgb}{0.95,0.95,0.92}

\lstdefinestyle{list_style}{
  backgroundcolor=\color{backcolour}, commentstyle=\color{codegreen},
  keywordstyle=\color{magenta},
  numberstyle=\tiny\color{codegray},
  stringstyle=\color{codepurple},
  basicstyle=\ttfamily\footnotesize,
  breakatwhitespace=false,         
  breaklines=true,                 
  captionpos=b,                    
  keepspaces=true,                 
  numbers=left,                    
  numbersep=5pt,                  
  showspaces=false,                
  showstringspaces=false,
  showtabs=false,                  
  tabsize=2
}

\newcommand{\xdasharrow}[2][->]{
\tikz[baseline=-\the\dimexpr\fontdimen22\textfont2\relax]{
\node[anchor=south,font=\scriptsize, inner ysep=1.5pt,outer xsep=2.2pt](x){#2};
\draw[shorten <=3.4pt,shorten >=3.4pt,dashed,#1](x.south west)--(x.south east);
}
}

\newcommand{\DEBUG}{}

\ifdefined\DEBUG%

  \def\rem#1{{\marginpar{\raggedright\scriptsize #1}}}
  \newcommand{\pmr}[1]{\rem{\color{blue}{$\bullet$ #1}}}
  \newcommand{\ppr}[1]{\rem{\color{red}{$\bullet$ #1}}}
 \else%

  \newcommand{\ppr}[1]{}
  \newcommand{\pmr}[1]{}
 \fi

\newcommand{\E}{{\mathbb E}}

\def\rho{\varrho_1}

\def\rd{\,{\mathrm d}}

\theoremstyle{plain}
\newtheorem{theorem}{Theorem}
\newtheorem{lemma}{Lemma}

\newtheorem{corollary}{Corollary}
\newtheorem{proposition}{Proposition}
\theoremstyle{definition}
\newtheorem{definition}{Definition}
\newtheorem{remark}{Remark}

\begin{document}

\title
[Randomized Milstein algorithm based on noisy information]
{Error of randomized Milstein scheme for scalar SDEs with noisy information about coefficients and Wiener process}

\author[P. M. Morkisz]{Pawe{\l } M. Morkisz}
\address{NVIDIA Corp. and AGH University of Krakow,
	Faculty of Applied Mathematics,
	Al. A.~Mickiewicza 30, 30-059 Krak\'ow, Poland}
\email{morkiszp@agh.edu.pl, corresponding author}

\author[P. Przybyłowicz]{Paweł Przybyłowicz}
\address{AGH University of Krakow,
Faculty of Applied Mathematics,
Al. A.~Mickiewicza 30, 30-059 Krak\'ow, Poland}
\email{pprzybyl@agh.edu.pl}

\author[M. Wi\k{a}cek]{Martyna Wi\k{a}cek}
\address{AGH University of Krakow,
Faculty of Applied Mathematics,
Al. A.~Mickiewicza 30, 30-059 Krak\'ow, Poland}
\email{martynawiacek@agh.edu.pl}

\begin{abstract}
We investigate the strong approximation of scalar stochastic differential equations
when the available standard information about the drift coefficient, the diffusion coefficient,
the derivative of the diffusion coefficient, and the observed Wiener path is corrupted by noise.
The precision of the drift, diffusion-information, and Wiener-path observations is described
by three nonnegative parameters \(\delta_1,\delta_2,\delta_3\), where \(\delta_2\) controls both
the noisy diffusion coefficient and the separate noisy derivative oracle required in the Milstein
correction. We analyze a randomized Milstein scheme based only on this noisy information
and prove, for \(r\ge2\), that its \(L^r\)-error is bounded by
\[
C\bigl(n^{-\min\{\gamma_1+\frac12,\gamma_2\}}+\delta_1+\delta_2+\delta_3\bigr),
\]
where \(n\) is the number of time steps and \(\gamma_1,\gamma_2\) are the temporal Hölder
exponents of the coefficients. We also prove a matching minimax lower bound in the
randomized standard-information model considered in the paper. In particular, the
Wiener-path contribution proportional to \(\delta_3\) is unavoidable, and the noisy randomized
Milstein scheme is minimax order-optimal.
\end{abstract}

\keywords{information-based complexity, minimax error, randomized Milstein method, stochastic differential equations, noisy data, lower bounds, Wiener process}
\subjclass[2020]{65C30, 68Q25}

\maketitle

\section{Introduction}
We investigate the strong approximation of solutions to scalar stochastic differential equations (SDEs)
\begin{equation}
\label{main_equation}
\left\{
\begin{array}{ll}
\rd X(t)=a(t,X(t))\,\rd t+b(t,X(t))\,\rd W(t), & t\in[0,T],\\[0.3em]
X(0)=\eta,
\end{array}
\right.
\end{equation}
where \(T>0\), \(W\) is a standard Wiener process, and \(\eta\) is the initial value.
Our goal is to approximate the terminal value \(X(T)\) in the strong sense, i.e. in \(L^r\)-norms, under the assumption that the available information about the coefficients and the driving path is inexact.

Classical numerical analysis of SDEs usually assumes exact access to point evaluations of the drift coefficient \(a\), the diffusion coefficient \(b\), and the increments of the driving Wiener process \(W\).
This idealization is often too strong. In computational finance, molecular dynamics, control, or data-driven modelling, coefficients may be obtained through calibration, statistical estimation, empirical measurements, or machine learning procedures. The resulting input data are therefore affected by modelling and numerical errors. Moreover, in practical simulation one often works not with an ideal Brownian path, but with a generated, discretized, filtered, or otherwise perturbed stochastic source. In this paper we formalize this situation through a smooth Markovian perturbation of the observed Wiener path, rather than through an arbitrary path-dependent perturbation model.

The analysis of algorithms under partial, noisy, or otherwise restricted information is one of the central themes of information-based complexity (IBC). For SDEs, sharp error estimates and optimality results under inexact coefficient information have been studied, among others, in \cite{PMPP17,PMPP19}. In this work we extend this viewpoint by incorporating an explicit perturbation model for the observed Wiener path. Recent work has introduced an IBC framework for randomized Euler approximation under noisy coefficient and Wiener-path information; see \cite{BaranekEtAl2026}. The present paper extends this setting to a randomized Milstein scheme. This requires a separate noisy information channel for \(b_y\) and leads to additional perturbation terms generated by the Milstein correction. Our main new lower-bound contribution is a sharp estimate linear in the Wiener-path noise level \(\delta_3\).

The numerical method considered in this paper is a randomized Milstein scheme under noisy information. The randomization is used in the drift term, as in randomized quadrature, and is particularly useful when the drift has limited temporal regularity. Since the Milstein correction involves
\[
L_1b(t,x)=b(t,x)b_y(t,x),
\]
our information model includes a separate noisy oracle for \(b_y\). This point is important: we do not differentiate the noisy diffusion coefficient \(\tilde b\), since the perturbation of \(b\) is only assumed to be measurable and may destroy differentiability. Instead, the noisy Milstein coefficient is constructed as
\[
\widetilde{L_1b}(t,x)=\tilde b(t,x)\,\widetilde{b_y}(t,x).
\]

The main contributions of the paper are as follows.
Firstly, we introduce an IBC framework for strong approximation of scalar SDEs in which the available data consist of noisy evaluations of \(a\), \(b\), \(b_y\), and a noisy version of the Wiener path. The corresponding precision levels are denoted by \(\delta_1,\delta_2,\delta_3\). The parameter \(\delta_3\) controls the perturbation of the observed Wiener process.

Then, we prove that the randomized Milstein scheme is stable with respect to the introduced noisy information. More precisely, the perturbation part of the error is bounded by
    \[
    C(\delta_1+\delta_2+\delta_3).
    \]
    Combining this stability estimate with the exact-information randomized Milstein rate yields the upper bound
    \[
    C\Bigl(n^{-\min\{\gamma_1+\frac12,\gamma_2\}}+\delta_1+\delta_2+\delta_3\Bigr),
    \]
    where \(n\) is the number of time steps and \(\gamma_1,\gamma_2\) describe the temporal regularity of the drift and diffusion coefficients.

Consequently, we prove that the dependence on \(\delta_3\) is unavoidable. The proof is a two-point lower-bound argument in the spirit of IBC, but with a statistical component: exact indistinguishability of information is replaced by near indistinguishability of observation laws in total variation. The latter is controlled by Pinsker's inequality and a Cameron--Martin relative-entropy computation.

Finally, we prove optimality, i.e., together with the known lower bounds for exact-information discretization and coefficient perturbations, the new lower bound for \(\delta_3\) yields minimax order-optimality, in the randomized
standard-information model considered below, up to multiplicative constants, for any $r\geq 2$:
    \[
e_N^{(r)}
\asymp
N^{-\min\{\gamma_1+\frac12,\gamma_2\}}
+\delta_1+\delta_2+\delta_3
    \]
    Thus, under the present assumptions, the proposed noisy randomized Milstein scheme attains
the minimax order in the considered information model.

We complement the theory with numerical experiments illustrating the perturbation response, fixed-noise error floors, balanced noise regimes, and the lower-bound mechanism behind the term proportional to the Wiener-path noise level \(\delta_3\).

The paper is organized as follows. Section~\ref{sec:preliminaries} introduces the input classes, the noisy information model, the randomized standard-information model, the minimal error, and the noisy randomized Milstein scheme. Section~\ref{sec:upper} proves the perturbation stability estimate and derives the upper error bound. Section~\ref{sec:lower} establishes the lower bounds, with particular emphasis on the Wiener-path noise contribution. Section~\ref{sec:numerical} presents numerical experiments.

\section{Preliminaries}\label{sec:preliminaries}

We write
\[
\mathbb N=\{1,2,\ldots\},
\qquad
\mathbb N_0=\{0,1,2,\ldots\}.
\]
We fix \(T>0\), \(K>0\), and regularity parameters
\(\gamma_1,\gamma_2\in(0,1]\).
Let \(W=\{W(t)\}_{t\ge0}\) be a standard one-dimensional Wiener process defined on a complete probability space
\((\Omega,\Sigma,\mathbb P)\).
Let \(\{\Sigma_t\}_{t\ge0}\) be a filtration satisfying the usual conditions such that \(W\) is a Wiener process with respect to \(\{\Sigma_t\}_{t\ge0}\), and set
\[
\Sigma_\infty:=\sigma\Bigl(\bigcup_{t\ge0}\Sigma_t\Bigr).
\]
For a real-valued random variable \(Y\) and \(r\in[1,\infty)\), we write
\[
\|Y\|_r:=\bigl(\mathbb E|Y|^r\bigr)^{1/r}.
\]

We assume throughout that the probability space is sufficiently rich to carry every finite auxiliary random vector used by the algorithms below, independently of \(\Sigma_\infty\).
Such a probability space exists by the {\L}omnicki--Ulam theorem; see, for example, \cite[p.~93]{Kallenberg}.
Thus no explicit enlargement or product construction of the underlying probability space will be needed.

We consider the scalar SDE
\begin{equation}
\label{eq:sde}
\left\{
\begin{array}{ll}
\rd X(t)=a(t,X(t))\,\rd t+b(t,X(t))\,\rd W(t), & t\in[0,T],\\[0.3em]
X(0)=\eta.
\end{array}
\right.
\end{equation}

\subsection{Admissible input classes}
\label{subsec:input-classes}

We first define the drift class.

\begin{definition}[Drift coefficients]
For \(\gamma_1\in(0,1]\), let \(\mathcal A_K^{\gamma_1}\) be the class of all functions
\(a:[0,T]\times\mathbb R\to\mathbb R\) such that
\begin{itemize}
    \item[(A1)] \(a\in C^{0,2}([0,T]\times\mathbb R)\),
    \item[(A2)] \(|a(0,0)|\le K\),
    \item[(A3)] \(|a(t,x)-a(t,y)|\le K|x-y|\),
    \item[(A4)] \(|a(t,x)-a(s,x)|\le K(1+|x|)|t-s|^{\gamma_1}\),
    \item[(A5)] \(|a_x(t,x)-a_x(t,y)|\le K|x-y|\),
    \item[(A6)] \(|a_x(t,x)-a_x(s,x)|\le K(1+|x|)|t-s|^{\gamma_1}\),
\end{itemize}
for all \(s,t\in[0,T]\) and \(x,y\in\mathbb R\), where \(a_x=\partial a/\partial x\).
\end{definition}

For a diffusion coefficient \(b\), we write
\[
b_y(t,x):=\frac{\partial b}{\partial x}(t,x),
\qquad
L_1b(t,x):=b(t,x)b_y(t,x).
\]

\begin{definition}[Diffusion coefficients]
For \(\gamma_2\in(0,1]\), let \(\mathcal B_K^{\gamma_2}\) be the class of all functions
\(b:[0,T]\times\mathbb R\to\mathbb R\) such that
\begin{itemize}
    \item[(B1)] \(b\in C^{0,2}([0,T]\times\mathbb R)\),
    \item[(B2)] \(|b(0,0)|\le K\),
    \item[(B3)] \(|b(t,x)-b(t,y)|\le K|x-y|\),
    \item[(B4)] \(|b(t,x)-b(s,x)|\le K(1+|x|)|t-s|^{\gamma_2}\),
    \item[(B5)] \(|b_y(t,x)|\le K\),
    \item[(B6)] \(|b_y(t,x)-b_y(t,y)|\le K|x-y|\),
    \item[(B7)] \(|L_1b(t,x)-L_1b(t,y)|\le K|x-y|\),
    \item[(B8)] \(|L_1b(t,x)-L_1b(s,x)|\le K(1+|x|)|t-s|^{\gamma_2}\),
\end{itemize}
for all \(s,t\in[0,T]\) and \(x,y\in\mathbb R\).
\end{definition}

For the initial condition, we set
\[
\mathcal J_K
:=
\bigl\{
\eta:\Omega\to\mathbb R
\ \big|\
\eta\text{ is }\Sigma_0\text{-measurable and }|\eta|\le K\text{ a.s.}
\bigr\}.
\]
The input class is
\begin{equation}
\label{eq:F-class}
\mathcal F(\gamma_1,\gamma_2,K)
:=
\mathcal A_K^{\gamma_1}\times\mathcal B_K^{\gamma_2}\times\mathcal J_K.
\end{equation}
For every \((a,b,\eta)\in\mathcal F(\gamma_1,\gamma_2,K)\), equation \eqref{eq:sde} admits a unique strong solution, denoted by
\[
X^{a,b,\eta}=\{X^{a,b,\eta}(t)\}_{t\in[0,T]}.
\]

\subsection{Noisy coefficient information}
\label{subsec:noisy-coefficients}

We consider precision parameters
\[
\delta_1,\delta_2,\delta_3\in[0,1],
\]
corresponding to perturbations of the drift coefficient \(a\), the diffusion information \((b,b_y)\), and the Wiener process \(W\), respectively.

We use the following classes of admissible perturbation functions:
\[
\mathcal K_{\mathrm{lin}}
:=
\Bigl\{
p:[0,T]\times\mathbb R\to\mathbb R
\ \Big|\
p\text{ is Borel measurable and } |p(t,x)|\le 1+|x|
\Bigr\},
\]
and
\[
\mathcal K_{\mathrm{bd}}
:=
\Bigl\{
q:[0,T]\times\mathbb R\to\mathbb R
\ \Big|\
q\text{ is Borel measurable and } |q(t,x)|\le 1
\Bigr\}.
\]

For fixed \(a\in\mathcal A_K^{\gamma_1}\) and \(b\in\mathcal B_K^{\gamma_2}\), define
\begin{equation}
\label{eq:Va-class}
V_a(\delta_1)
:=
\bigl\{
\tilde a:\ \tilde a=a+\delta_1p_a,\quad p_a\in\mathcal K_{\mathrm{lin}}
\bigr\},
\end{equation}
\begin{equation}
\label{eq:Vb-class}
V_b(\delta_2)
:=
\bigl\{
\tilde b:\ \tilde b=b+\delta_2p_b,\quad p_b\in\mathcal K_{\mathrm{lin}}
\bigr\},
\end{equation}
and
\begin{equation}
\label{eq:Vbby-class}
V_{b,b_y}(\delta_2)
:=
\bigl\{
\widetilde{b_y}:\ \widetilde{b_y}=b_y+\delta_2q_b,\quad q_b\in\mathcal K_{\mathrm{bd}}
\bigr\}.
\end{equation}
Thus \(V_a(\delta_1)\), \(V_b(\delta_2)\), and \(V_{b,b_y}(\delta_2)\) describe three separate noisy information channels: the drift coefficient, the diffusion coefficient, and the derivative information associated with the diffusion coefficient. The notation \(V_{b,b_y}\) emphasizes that this last class depends on the fixed diffusion coefficient \(b\) through its derivative \(b_y\).

The Milstein correction computed from noisy coefficient information is
\begin{equation}
\label{eq:L1b-tilde-def}
\widetilde{L_1b}(t,x):=\tilde b(t,x)\,\widetilde{b_y}(t,x).
\end{equation}

\begin{remark}
Elements of \(V_b(\delta_2)\) and \(V_{b,b_y}(\delta_2)\) are separate information channels, although they are controlled by the same precision parameter \(\delta_2\).
In particular, \(\widetilde{b_y}\) is not assumed to be the derivative of \(\tilde b\).
This distinction is necessary because the perturbation in \(\tilde b=b+\delta_2p_b\) is only assumed to be measurable and may destroy differentiability.
Evaluations of \(\tilde b\) and \(\widetilde{b_y}\) are counted separately in the information cost.
\end{remark}

\subsection{Noisy Wiener information}
\label{subsec:noisy-wiener}

For the perturbation of the Wiener process, we introduce
\[
\widetilde{\mathcal K}
:=
\Bigl\{
p:[0,T]\times\mathbb R\to\mathbb R
\ \Big|\
p\in C^{1,2}([0,T]\times\mathbb R),\ |p(0,0)|\le 1,
\]
\[
\hspace{9em}
\max\{|p_t(t,x)|,\ |p_x(t,x)|,\ |p_{xx}(t,x)|\}\le 1
\text{ for all }(t,x)\in[0,T]\times\mathbb R
\Bigr\}.
\]
Every \(p\in\widetilde{\mathcal K}\) satisfies
\begin{equation}
\label{eq:pW-growth}
|p(t,x)|\le 1+T+|x|,
\qquad (t,x)\in[0,T]\times\mathbb R.
\end{equation}

For the fixed Wiener process \(W\), the admissible noisy paths are
\begin{multline}
\label{eq:W-class}
\mathcal W(W,\delta_3)
:=
\Bigl\{
\tilde W:[0,T]\times\Omega\to\mathbb R
\ \Big|\
\exists p_W\in\widetilde{\mathcal K}\text{ such that}\\
\tilde W(t,\omega)=W(t,\omega)+\delta_3p_W\bigl(t,W(t,\omega)\bigr)
\text{ for all }t\in[0,T]
\Bigr\}.
\end{multline}
Whenever \(\tilde W\in\mathcal W(W,\delta_3)\), we write
\begin{equation}
\label{eq:W-tilde-Z}
\tilde W(t)=W(t)+\delta_3Z(t),
\qquad
Z(t):=p_W(t,W(t)).
\end{equation}

\begin{remark}
The perturbation of \(W\) is not arbitrary path-dependent noise. It is a Markovian perturbation of the form \(p_W(t,W(t))\), with \(p_W\in C^{1,2}\) and uniformly bounded derivatives. By It\^o's formula,
\[
\rd Z(t)
=
p_{W,x}(t,W(t))\,\rd W(t)
+
\left(p_{W,t}(t,W(t))+\frac12p_{W,xx}(t,W(t))\right)\rd t,
\]
so \(Z\) is a continuous semimartingale with uniformly controlled coefficients.
\end{remark}

\subsection{Randomized standard information and algorithms}
\label{subsec:adaptive-information}

We reserve \(N\in\mathbb N\) for the information budget. The letter \(n\in\mathbb N\) will later denote the number of time steps of the concrete Milstein scheme.

Fix \(i_1,i_2\in\mathbb N_0\). Let
\[
\boldsymbol\xi=(\xi_0,\ldots,\xi_{i_1-1})
\]
be an \([0,T]^{i_1}\)-valued random vector such that
\begin{equation}
\label{eq:xi-independent}
\sigma(\boldsymbol\xi)\quad\text{is independent of}\quad\Sigma_\infty.
\end{equation}
The law of \(\boldsymbol\xi\) is part of the information mapping and is fixed independently of the particular input.
Let
\[
t_0,\ldots,t_{i_1-1}\in[0,T],
\qquad
s_0,\ldots,s_{i_2-1}\in[0,T]
\]
be deterministic observation times, fixed in advance. We assume that the points within each of these two families are pairwise distinct.

For a noisy input
\[
(\tilde a,\tilde b,\widetilde{b_y},\tilde W,\eta),
\]
put
\[
\mathbf w
:=
\bigl(\tilde W(s_0),\ldots,\tilde W(s_{i_2-1})\bigr).
\]
The spatial evaluation points may be chosen adaptively from the auxiliary random vector
\(\boldsymbol\xi\), previously observed coefficient values, the vector \(\mathbf w\), and \(\eta\).
More precisely, for \(j=0,\ldots,i_1-1\), let
\[
\psi_j:[0,T]^{i_1}\times\mathbb R^{3j+i_2+1}\to\mathbb R^3
\]
be Borel measurable. With the convention that the vectors below are empty when \(j=0\), define
\[
\mathbf a_j
:=
\bigl(\tilde a(\xi_0,y_0),\ldots,\tilde a(\xi_{j-1},y_{j-1})\bigr),
\]
\[
\mathbf b_j
:=
\bigl(\tilde b(t_0,z_0),\ldots,\tilde b(t_{j-1},z_{j-1})\bigr),
\]
and
\[
\mathbf b_{y,j}
:=
\bigl(\widetilde{b_y}(t_0,u_0),\ldots,\widetilde{b_y}(t_{j-1},u_{j-1})\bigr).
\]
The next spatial arguments are generated recursively by
\begin{equation}
\label{eq:adaptive-spatial-points}
(y_j,z_j,u_j)
=
\psi_j\bigl(\boldsymbol\xi,\mathbf a_j,\mathbf b_j,\mathbf b_{y,j},\mathbf w,\eta\bigr),
\qquad j=0,\ldots,i_1-1.
\end{equation}

The resulting vector of noisy standard information is
\begin{equation}
\label{eq:information-vector}
\begin{split}
\mathcal N(\tilde a,\tilde b,\widetilde{b_y},\tilde W,\eta)
:=
\bigl[&
\tilde a(\xi_0,y_0),\ldots,\tilde a(\xi_{i_1-1},y_{i_1-1}),\\
&\tilde b(t_0,z_0),\ldots,\tilde b(t_{i_1-1},z_{i_1-1}),\\
&\widetilde{b_y}(t_0,u_0),\ldots,
  \widetilde{b_y}(t_{i_1-1},u_{i_1-1}),\\
&\tilde W(s_0),\ldots,\tilde W(s_{i_2-1}),\eta
\bigr].
\end{split}
\end{equation}
The obvious empty-block convention is used when \(i_1=0\) or \(i_2=0\).
Its information cost is
\begin{equation}
\label{eq:information-cost}
\operatorname{cost}(\mathcal N):=3i_1+i_2.
\end{equation}
The exact value of \(\eta\) and the auxiliary random vector \(\boldsymbol\xi\) are available
to the algorithm without cost. This means that \(\boldsymbol\xi\) may enter both the spatial
selection rules and the final output map, but it is not counted in the information cost.

An algorithm based on \(\mathcal N\) is of the form
\begin{equation}
\label{eq:general-algorithm}
\mathcal A(\tilde a,\tilde b,\widetilde{b_y},\tilde W,\eta)
=
\varphi\!\left(
\boldsymbol\xi,
\mathcal N(\tilde a,\tilde b,\widetilde{b_y},\tilde W,\eta)
\right),
\end{equation}
where
\[
\varphi:[0,T]^{i_1}\times\mathbb R^{3i_1+i_2+1}\to\mathbb R
\]
is Borel measurable.
For \(N\in\mathbb N\), let \(\Phi_N\) denote the class of all algorithms of the form \eqref{eq:general-algorithm} based on information mappings satisfying
\[
3i_1+i_2\le N.
\]

\begin{remark}
The observation times \(t_j\) and \(s_k\), as well as the law of \(\boldsymbol\xi\), are fixed in advance; adaptivity is allowed only in the spatial arguments of the coefficient evaluations. This is the standard-information framework used in \cite{PMPP19}, with the second diffusion block replaced by a separate noisy information channel for \(b_y\), and with noisy rather than exact observations of the Wiener path.
\end{remark}

\subsection{Error criteria}
\label{subsec:error-criteria}

Let \(r\in[1,\infty)\), let \((a,b,\eta)\in\mathcal F(\gamma_1,\gamma_2,K)\), and let
\(\delta_1,\delta_2,\delta_3\in[0,1]\).
For \(\mathcal A\in\Phi_N\), define the local worst-case error
\begin{multline}
\label{eq:error-single-input}
e^{(r)}\bigl(\mathcal A,a,b,\eta,\delta_1,\delta_2,\delta_3\bigr)
\\
:=
\sup_{\tilde a\in V_a(\delta_1)}
\sup_{\tilde b\in V_b(\delta_2)}
\sup_{\widetilde{b_y}\in V_{b,b_y}(\delta_2)}
\sup_{\tilde W\in\mathcal W(W,\delta_3)}
\left\|
X^{a,b,\eta}(T)
-
\mathcal A(\tilde a,\tilde b,\widetilde{b_y},\tilde W,\eta)
\right\|_r .
\end{multline}
The worst-case error over the input class is
\begin{equation}
\label{eq:error-class}
e^{(r)}\bigl(\mathcal A,\mathcal F(\gamma_1,\gamma_2,K),\delta_1,\delta_2,\delta_3\bigr)
:=
\sup_{(a,b,\eta)\in\mathcal F(\gamma_1,\gamma_2,K)}
e^{(r)}\bigl(\mathcal A,a,b,\eta,\delta_1,\delta_2,\delta_3\bigr).
\end{equation}
Finally, the \(N\)-th minimal error is
\begin{equation}
\label{eq:minimal-error}
e_N^{(r)}\bigl(\mathcal F(\gamma_1,\gamma_2,K),\delta_1,\delta_2,\delta_3\bigr)
:=
\inf_{\mathcal A\in\Phi_N}
e^{(r)}\bigl(\mathcal A,\mathcal F(\gamma_1,\gamma_2,K),\delta_1,\delta_2,\delta_3\bigr).
\end{equation}

\subsection{The randomized Milstein scheme under noisy information}
\label{subsec:randomized-milstein}

We now define the particular algorithm analyzed in the paper. Let \(n\in\mathbb N\),
\[
h=\frac{T}{n},
\qquad
t_i=ih,
\qquad
i=0,1,\ldots,n.
\]
Let \(\xi_0,\ldots,\xi_{n-1}\) be independent random variables such that \(\xi_i\) is uniformly distributed on \([t_i,t_{i+1}]\), and
\[
\sigma(\xi_0,\ldots,\xi_{n-1})
\quad\text{is independent of}\quad
\Sigma_\infty.
\]

For a process \(Y=\{Y(t)\}_{t\in[0,T]}\), set
\[
\Delta Y_i:=Y(t_{i+1})-Y(t_i),
\qquad
i=0,1,\ldots,n-1,
\]
and define
\begin{equation}
\label{eq:I-def}
I_i(Y,Y):=\frac12\bigl((\Delta Y_i)^2-h\bigr).
\end{equation}

The auxiliary exact-information randomized Milstein scheme is
\begin{equation}
\label{eq:exact-rm}
\left\{
\begin{array}{ll}
X^{RM}_{n,0}=\eta,\\[0.3em]
X^{RM}_{n,i+1}
=
X^{RM}_{n,i}
+
a(\xi_i,X^{RM}_{n,i})h
+
b(t_i,X^{RM}_{n,i})\Delta W_i
+
L_1b(t_i,X^{RM}_{n,i})I_i(W,W),
\end{array}
\right.
\end{equation}
for \(i=0,1,\ldots,n-1\).

Given noisy information
\[
(\tilde a,\tilde b,\widetilde{b_y},\tilde W,\eta),
\]
the noisy randomized Milstein scheme is
\begin{equation}
\label{eq:noisy-rm}
\left\{
\begin{array}{ll}
\widetilde X^{RM}_{n,0}=\eta,\\[0.3em]
\widetilde X^{RM}_{n,i+1}
=
\widetilde X^{RM}_{n,i}
+
\tilde a(\xi_i,\widetilde X^{RM}_{n,i})h
+
\tilde b(t_i,\widetilde X^{RM}_{n,i})\Delta\tilde W_i\\[0.3em]
\hspace{7em}
+
\widetilde{L_1b}(t_i,\widetilde X^{RM}_{n,i})I_i(\tilde W,\tilde W),
\end{array}
\right.
\end{equation}
for \(i=0,1,\ldots,n-1\), where
\[
\widetilde{L_1b}(t,x)=\tilde b(t,x)\widetilde{b_y}(t,x).
\]

\begin{remark}
For \(Y=W\), the quantity \(I_i(W,W)\) coincides with the scalar iterated Itô integral
\[
\int_{t_i}^{t_{i+1}}\int_{t_i}^{s}\rd W(u)\,\rd W(s)
=
\frac12\bigl((\Delta W_i)^2-h\bigr).
\]
For \(Y=\tilde W\), the quantity \(I_i(\tilde W,\tilde W)\) should not be interpreted as an Itô iterated integral with respect to the semimartingale \(\tilde W\). It is the Brownian-type Milstein correction computed by the algorithm from the corrupted increments, using the Brownian quadratic variation term \(h\).
\end{remark}

The associated output algorithm is
\begin{equation}
\label{eq:rm-output}
\mathcal A_n^{RM,\mathrm{noisy}}(\tilde a,\tilde b,\widetilde{b_y},\tilde W,\eta)
:=
\widetilde X^{RM}_{n,n}.
\end{equation}

The information cost of \(\mathcal A_n^{RM,\mathrm{noisy}}\) is at most \(4n+1\). Indeed, the algorithm uses:
\begin{itemize}
    \item \(n\) evaluations of \(\tilde a\),
    \item \(n\) evaluations of \(\tilde b\),
    \item \(n\) evaluations of \(\widetilde{b_y}\),
    \item \(n+1\) evaluations of \(\tilde W\) at \(t_0,\ldots,t_n\).
\end{itemize}
The value of \(\tilde b(t_i,\widetilde X^{RM}_{n,i})\) queried for the diffusion term is reused in the computation of \(\widetilde{L_1b}(t_i,\widetilde X^{RM}_{n,i})\).
In the notation of Section~\ref{subsec:adaptive-information}, the scheme corresponds to
\(i_1=n\), \(i_2=n+1\), and \(s_i=t_i\), with the spatial arguments generated recursively by the current approximation.
Consequently,
\begin{equation}
\label{eq:cost-rm}
\mathcal A_n^{RM,\mathrm{noisy}}\in\Phi_{4n+1}.
\end{equation}

\section{Error of the randomized Milstein scheme under noisy information}\label{sec:upper}
We investigate the error of the randomized Milstein scheme when only noisy standard information about \(a\), \(b\), \(b_y\), and \(W\) is available for the scalar SDE \eqref{eq:sde}.

\subsection{Error decomposition for the noisy randomized Milstein scheme}

Throughout this subsection, we fix \(n\in\mathbb N\), set
\[
h=\frac{T}{n},
\qquad
t_i=ih,
\qquad
i=0,1,\dots,n,
\]
and consider the auxiliary exact and noisy randomized Milstein schemes
\eqref{eq:exact-rm} and \eqref{eq:noisy-rm}.

For convenience, we write
\[
X_i:=X^{RM}_{n,i},
\qquad
\widetilde X_i:=\widetilde X^{RM}_{n,i},
\qquad
e_i:=X_i-\widetilde X_i,
\qquad i=0,1,\dots,n.
\]

We also introduce the enlarged filtration
\begin{equation}
\label{eq:G-filtration-continuous}
\mathcal G_t:=\Sigma_t\vee\sigma(\xi_0,\dots,\xi_{n-1}),
\qquad t\in[0,T],
\end{equation}
and write
\begin{equation}
\label{eq:G-filtration}
\mathcal G_i:=\mathcal G_{t_i},
\qquad i=0,1,\dots,n.
\end{equation}
By the independence assumption in \eqref{eq:xi-independent}, the process \(W\) is still a Wiener process with respect to \(\{\mathcal G_t\}_{t\in[0,T]}\). Since \(\eta\) is \(\Sigma_0\)-measurable, and since the variables \(\xi_0,\dots,\xi_{n-1}\) are available to the scheme as auxiliary randomization, \(X_i\) and \(\widetilde X_i\) are \(\mathcal G_i\)-measurable for every \(i\). Consequently,
\begin{equation}
\label{eq:martingale-differences-basic}
\mathbb E[\Delta W_i\mid \mathcal G_i]=0,
\qquad
\mathbb E[I_i(W,W)\mid \mathcal G_i]=0,
\qquad i=0,1,\dots,n-1.
\end{equation}

We begin with the growth estimates.

\begin{lemma}
\label{lem:elementary-growth}
There exists a constant \(C=C(T,K,\gamma_1,\gamma_2)\) such that for all
\((a,b,\eta)\in\mathcal F(\gamma_1,\gamma_2,K)\), all
\(\tilde a\in V_a(\delta_1)\), all
\(\tilde b\in V_b(\delta_2)\), all
\(\widetilde{b_y}\in V_{b,b_y}(\delta_2)\), and all
\((t,x)\in[0,T]\times\mathbb R\),
\begin{align}
|a(t,x)|+|b(t,x)|+|L_1b(t,x)| &\le C(1+|x|), \label{eq:growth-base}\\
|\tilde a(t,x)|+|\tilde b(t,x)|+|\widetilde{L_1b}(t,x)| &\le C(1+|x|), \label{eq:growth-noisy}\\
|a(t,x)-\tilde a(t,x)| &\le \delta_1(1+|x|), \label{eq:noise-a}\\
|b(t,x)-\tilde b(t,x)| &\le \delta_2(1+|x|), \label{eq:noise-b}\\
|L_1b(t,x)-\widetilde{L_1b}(t,x)| &\le C\,\delta_2(1+|x|). \label{eq:noise-L1b}
\end{align}
\end{lemma}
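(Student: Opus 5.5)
The plan is to derive all five estimates directly from the defining conditions of the classes $\mathcal A_K^{\gamma_1}$, $\mathcal B_K^{\gamma_2}$ and of the perturbation classes $\mathcal K_{\mathrm{lin}}$, $\mathcal K_{\mathrm{bd}}$, using $\delta_1,\delta_2\in[0,1]$. First I would prove \eqref{eq:growth-base}. For the drift, (A2)–(A4) give
\[
|a(t,x)|\le |a(0,0)|+|a(t,0)-a(0,0)|+|a(t,x)-a(t,0)|\le K+KT^{\gamma_1}+K|x|.
\]
The same chain with (B2)–(B4) gives $|b(t,x)|\le K(1+T^{\gamma_2})+K|x|$. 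For $L_1b$ I would avoid (B7)–(B8). Instead I would use (B5), $|b_y|\le K$, to get $|L_1b(t,x)|=|b(t,x)||b_y(t,x)|\le K\cdot C(1+|x|)$. All constants depend only on $T,K,\gamma_1,\gamma_2$.

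Next come the noise bounds \eqref{eq:noise-a} and \eqref{eq:noise-b}, which are immediate. Since $\tilde a-a=\delta_1p_a$ with $|p_a(t,x)|\le 1+|x|$, we get \eqref{eq:noise-a}, and \eqref{eq:noise-b} follows in the same way. From these and \eqref{eq:growth-base}, with $\delta_i\le1$, we obtain $|\tilde a|,|\tilde b|\le (C+1)(1+|x|)$. For $\widetilde{b_y}=b_y+\delta_2q_b$ with $|q_b|\le1$, I would note that $|\widetilde{b_y}|\le K+1$ is bounded. Hence $|\widetilde{L_1b}|=|\tilde b||\widetilde{b_y}|\le (K+1)(C+1)(1+|x|)$, which completes \eqref{eq:growth-noisy}.

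The only step needing slight care is \eqref{eq:noise-L1b}, because the product structure mixes the two channels. I would split it as
\[
L_1b-\widetilde{L_1b}=(b-\tilde b)\,b_y+\tilde b\,(b_y-\widetilde{b_y}).
\]
The first term is bounded by $K\delta_2(1+|x|)$ using (B5) and \eqref{eq:noise-b}. The second term is bounded by $(C+1)(1+|x|)\cdot\delta_2$ using the growth of $\tilde b$ and $|q_b|\le1$. Summing gives $C\delta_2(1+|x|)$.

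It is essential here that the derivative noise is bounded, i.e. $q_b\in\mathcal K_{\mathrm{bd}}$ rather than $\mathcal K_{\mathrm{lin}}$. With linear-growth derivative noise, the product $\tilde b\,\delta_2 q_b$ would grow quadratically, and the claimed bound would fail. Finally, I would take $C$ to be the maximum of the constants appearing above.
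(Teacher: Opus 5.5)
Your proof is correct and follows the paper's argument: the same triangle-inequality bounds for $\tilde a,\tilde b$, the bound $|\widetilde{b_y}|\le K+1$, and the identical splitting $L_1b-\widetilde{L_1b}=(b-\tilde b)\,b_y+\tilde b\,(b_y-\widetilde{b_y})$. Your explicit derivation of \eqref{eq:growth-base} via (A2)--(A4), (B2)--(B4) and $|L_1b|\le K|b|$ fills in what the paper states in one line. Using (B5) there is the right choice, since (B7)--(B8) alone would not control $L_1b(0,0)$.
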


\begin{proof}
The bounds \eqref{eq:growth-base} follow from the defining assumptions on
\(\mathcal A_K^{\gamma_1}\), \(\mathcal B_K^{\gamma_2}\), and \(L_1b\).

Next, by the definition of \(V_a(\delta_1)\),
\[
\tilde a(t,x)=a(t,x)+\delta_1 p_a(t,x),
\qquad |p_a(t,x)|\le 1+|x|,
\]
which yields \eqref{eq:noise-a}. Combining this with \eqref{eq:growth-base}, we obtain
\eqref{eq:growth-noisy} for \(\tilde a\).

Similarly,
\[
\tilde b(t,x)=b(t,x)+\delta_2 p_b(t,x),
\qquad |p_b(t,x)|\le 1+|x|,
\]
which implies \eqref{eq:noise-b} and the linear growth bound for \(\tilde b\).

Moreover,
\[
\widetilde{b_y}(t,x)=b_y(t,x)+\delta_2 q_b(t,x),
\qquad |q_b(t,x)|\le 1,
\]
hence
\[
|\widetilde{b_y}(t,x)|\le |b_y(t,x)|+\delta_2 \le K+1.
\]
Therefore, using the linear growth of \(\tilde b\),
\[
|\widetilde{L_1b}(t,x)|
=
|\tilde b(t,x)\widetilde{b_y}(t,x)|
\le C(1+|x|),
\]
which proves the remaining part of \eqref{eq:growth-noisy}.

Finally,
\begin{align*}
|L_1b(t,x)-\widetilde{L_1b}(t,x)|
&=
|b(t,x)b_y(t,x)-\tilde b(t,x)\widetilde{b_y}(t,x)|\\
&\le
|b(t,x)-\tilde b(t,x)|\,|b_y(t,x)|
+
|\tilde b(t,x)|\,|b_y(t,x)-\widetilde{b_y}(t,x)|\\
&\le
K\,\delta_2(1+|x|)
+
C(1+|x|)\,\delta_2,
\end{align*}
which yields \eqref{eq:noise-L1b}.
\end{proof}

We now analyze the perturbation of Wiener increments.

\begin{lemma}
\label{lem:W-increment-decomposition}
Let \(\tilde W\in\mathcal W(W,\delta_3)\), and write
\[
\tilde W(t)=W(t)+\delta_3 Z(t),
\qquad
Z(t)=p_W(t,W(t)),
\qquad
p_W\in\widetilde{\mathcal K}.
\]
Then, for every \(i=0,1,\dots,n-1\),
\begin{align}
\Delta \tilde W_i &= \Delta W_i + \delta_3 \Delta Z_i, \label{eq:dW-tilde-decomp}\\
I_i(\tilde W,\tilde W) &= I_i(W,W)+\delta_3\,\Delta W_i\,\Delta Z_i+\frac{\delta_3^2}{2}(\Delta Z_i)^2. \label{eq:I-tilde-decomp}
\end{align}
\end{lemma}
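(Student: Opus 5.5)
The plan is a direct algebraic verification based on linearity of the increment operator \(\Delta\) and on the definition \eqref{eq:I-def} of \(I_i\). No stochastic calculus is needed: both identities hold pathwise, for every \(\omega\in\Omega\). The Itô representation of \(Z\) from the remark in Section~\ref{subsec:noisy-wiener} plays no role here; it only becomes relevant later, when the terms \(\Delta Z_i\) are estimated.

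First, by \eqref{eq:W-tilde-Z} we have \(\tilde W(t)=W(t)+\delta_3Z(t)\) for all \(t\in[0,T]\). Evaluating at \(t_{i+1}\) and \(t_i\) and subtracting gives
\[
\Delta\tilde W_i=\tilde W(t_{i+1})-\tilde W(t_i)=\bigl(W(t_{i+1})-W(t_i)\bigr)+\delta_3\bigl(Z(t_{i+1})-Z(t_i)\bigr)=\Delta W_i+\delta_3\Delta Z_i,
\]
which is \eqref{eq:dW-tilde-decomp}.

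Second, insert this into the definition \(I_i(\tilde W,\tilde W)=\frac12\bigl((\Delta\tilde W_i)^2-h\bigr)\) and expand the square:
\[
(\Delta\tilde W_i)^2=(\Delta W_i)^2+2\delta_3\,\Delta W_i\,\Delta Z_i+\delta_3^2(\Delta Z_i)^2 .
\]
Subtracting \(h\) and halving gives \(\frac12\bigl((\Delta W_i)^2-h\bigr)+\delta_3\Delta W_i\Delta Z_i+\frac{\delta_3^2}{2}(\Delta Z_i)^2\). The first term is exactly \(I_i(W,W)\) by \eqref{eq:I-def}, which yields \eqref{eq:I-tilde-decomp}.

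The only point that needs care, and it is minor, is that the algorithm uses the deterministic Brownian quadratic-variation term \(h\) for \(I_i(\tilde W,\tilde W)\), as stressed in the remark after \eqref{eq:noisy-rm}. This is what keeps the identity purely algebraic: the same \(h\) appears in both \(I_i(\tilde W,\tilde W)\) and \(I_i(W,W)\), so no extra quadratic-variation term of \(Z\) has to be compensated.
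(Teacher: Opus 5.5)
Your proof is correct and follows essentially the same route as the paper. Both obtain \eqref{eq:dW-tilde-decomp} by subtracting the pathwise identity $\tilde W(t)=W(t)+\delta_3Z(t)$ at $t_{i+1}$ and $t_i$, and then get \eqref{eq:I-tilde-decomp} by expanding $(\Delta\tilde W_i)^2$ in the definition of $I_i(\tilde W,\tilde W)$, where the same deterministic term $h$ appears as in $I_i(W,W)$.
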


\begin{proof}
Identity \eqref{eq:dW-tilde-decomp} is immediate from the definition of \(\tilde W\).
Using \eqref{eq:dW-tilde-decomp}, we get
\[
(\Delta \tilde W_i)^2
=
(\Delta W_i)^2 + 2\delta_3\,\Delta W_i\,\Delta Z_i + \delta_3^2 (\Delta Z_i)^2.
\]
Hence
\begin{align*}
I_i(\tilde W,\tilde W)
&=
\frac12\bigl((\Delta \tilde W_i)^2-h\bigr)\\
&=
\frac12\bigl((\Delta W_i)^2-h\bigr)
+
\delta_3\,\Delta W_i\,\Delta Z_i
+
\frac{\delta_3^2}{2}(\Delta Z_i)^2\\
&=
I_i(W,W)+\delta_3\,\Delta W_i\,\Delta Z_i+\frac{\delta_3^2}{2}(\Delta Z_i)^2,
\end{align*}
which proves \eqref{eq:I-tilde-decomp}.
\end{proof}

We now derive the exact recursion for the difference \(e_i=X_i-\widetilde X_i\).

\begin{proposition}
\label{prop:error-decomposition}
For every \(i=0,1,\dots,n-1\),

\begin{equation}
\label{eq:error-recursion}
e_{i+1} = e_i + \underbrace{S^{(a)}_i + S^{(b)}_i + S^{(L)}_i}_{\text{Stability terms}} + \underbrace{N^{(a)}_i + N^{(b)}_i + N^{(L)}_i}_{\text{Coeff.\ noise terms}} + \underbrace{N^{(W,1)}_i + N^{(W,2)}_i + N^{(W,3)}_i}_{\text{Wiener noise terms}},
\end{equation}
where the individual components are defined as follows
\begin{align*}
S_i^{(a)}
&=
\bigl(a(\xi_i,X_i)-a(\xi_i,\widetilde X_i)\bigr)h,\\
S_i^{(b)}
&=
\bigl(b(t_i,X_i)-b(t_i,\widetilde X_i)\bigr)\Delta W_i,\\
S_i^{(L)}
&=
\bigl(L_1b(t_i,X_i)-L_1b(t_i,\widetilde X_i)\bigr)I_i(W,W),
\end{align*}
\begin{align*}
N_i^{(a)}
&=
\bigl(a(\xi_i,\widetilde X_i)-\tilde a(\xi_i,\widetilde X_i)\bigr)h,\\
N_i^{(b)}
&=
\bigl(b(t_i,\widetilde X_i)-\tilde b(t_i,\widetilde X_i)\bigr)\Delta W_i,\\
N_i^{(L)}
&=
\bigl(L_1b(t_i,\widetilde X_i)-\widetilde{L_1b}(t_i,\widetilde X_i)\bigr)I_i(W,W),
\end{align*}
\begin{align*}
N_i^{(W,1)}
&=
-\delta_3\,\tilde b(t_i,\widetilde X_i)\Delta Z_i,\\
N_i^{(W,2)}
&=
-\delta_3\,\widetilde{L_1b}(t_i,\widetilde X_i)\Delta W_i\,\Delta Z_i,\\
N_i^{(W,3)}
&=
-\frac{\delta_3^2}{2}\,\widetilde{L_1b}(t_i,\widetilde X_i)(\Delta Z_i)^2.
\end{align*}
Consequently, for every \(k=1,\dots,n\),
\begin{equation}
\label{eq:error-sum}
e_k
=
\sum_{i=0}^{k-1}
\Big(
S_i^{(a)}+S_i^{(b)}+S_i^{(L)}
+
N_i^{(a)}+N_i^{(b)}+N_i^{(L)}
+
N_i^{(W,1)}+N_i^{(W,2)}+N_i^{(W,3)}
\Big).
\end{equation}
\end{proposition}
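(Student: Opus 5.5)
The recursion \eqref{eq:error-recursion} is a purely algebraic identity, so the plan is to subtract \eqref{eq:noisy-rm} from \eqref{eq:exact-rm} step by step and regroup. Since \(e_{i+1}=X_{i+1}-\widetilde X_{i+1}\), we get
\[
e_{i+1}=e_i+\bigl(a(\xi_i,X_i)-\tilde a(\xi_i,\widetilde X_i)\bigr)h
+b(t_i,X_i)\Delta W_i-\tilde b(t_i,\widetilde X_i)\Delta\tilde W_i
+L_1b(t_i,X_i)I_i(W,W)-\widetilde{L_1b}(t_i,\widetilde X_i)I_i(\tilde W,\tilde W).
\]
We then treat the drift, diffusion and Milstein differences separately, each time inserting the intermediate value of the exact coefficient at the noisy state \(\widetilde X_i\).

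For the drift term we insert \(\pm a(\xi_i,\widetilde X_i)h\). This splits it as \(S_i^{(a)}+N_i^{(a)}\).

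For the diffusion term we first use Lemma~\ref{lem:W-increment-decomposition}, namely \eqref{eq:dW-tilde-decomp}, to write
\[
\tilde b(t_i,\widetilde X_i)\Delta\tilde W_i=\tilde b(t_i,\widetilde X_i)\Delta W_i+\delta_3\tilde b(t_i,\widetilde X_i)\Delta Z_i .
\]
The second summand, with its minus sign, is exactly \(N_i^{(W,1)}\). For the remaining difference \(b(t_i,X_i)\Delta W_i-\tilde b(t_i,\widetilde X_i)\Delta W_i\) we insert \(\pm b(t_i,\widetilde X_i)\Delta W_i\), which gives \(S_i^{(b)}+N_i^{(b)}\).

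The Milstein term is handled the same way. By \eqref{eq:I-tilde-decomp},
\[
\widetilde{L_1b}(t_i,\widetilde X_i)I_i(\tilde W,\tilde W)
=\widetilde{L_1b}(t_i,\widetilde X_i)\Bigl(I_i(W,W)+\delta_3\Delta W_i\Delta Z_i+\tfrac{\delta_3^2}{2}(\Delta Z_i)^2\Bigr).
\]
With minus signs, the last two contributions are \(N_i^{(W,2)}\) and \(N_i^{(W,3)}\). The remaining part is \(\bigl(L_1b(t_i,X_i)-\widetilde{L_1b}(t_i,\widetilde X_i)\bigr)I_i(W,W)\). Inserting \(\pm L_1b(t_i,\widetilde X_i)I_i(W,W)\) yields \(S_i^{(L)}+N_i^{(L)}\). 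Collecting the nine pieces gives \eqref{eq:error-recursion}.

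Finally, \eqref{eq:error-sum} follows by telescoping \eqref{eq:error-recursion} over \(i=0,\dots,k-1\), using \(e_0=X_0-\widetilde X_0=\eta-\eta=0\).

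There is no genuine obstacle here. The only point needing care is bookkeeping: getting the signs right (all Wiener-noise terms carry a minus sign because they come from the noisy scheme), and making sure the splitting uses \(b\) and \(L_1b\) at \(\widetilde X_i\) with the exact increments \(\Delta W_i\) and \(I_i(W,W)\). This choice is what makes each \(S_i\) a Lipschitz-in-state difference, and each \(N_i^{(\cdot)}\) a pure coefficient-noise term multiplied by a martingale-difference factor as in \eqref{eq:martingale-differences-basic}.
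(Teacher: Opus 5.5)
Your proposal is correct and follows the paper's proof: subtract \eqref{eq:noisy-rm} from \eqref{eq:exact-rm}, substitute the decompositions of Lemma~\ref{lem:W-increment-decomposition}, add and subtract $a(\xi_i,\widetilde X_i)$, $b(t_i,\widetilde X_i)$ and $L_1b(t_i,\widetilde X_i)$, then telescope using $e_0=0$. Only your closing commentary is slightly loose, since $N_i^{(a)}$ carries the factor $h$ rather than a martingale-difference factor, but this does not affect the identity.
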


\begin{proof}
Subtracting \eqref{eq:noisy-rm} from \eqref{eq:exact-rm}, we obtain
\begin{align*}
e_{i+1}
&=
e_i
+
\bigl(a(\xi_i,X_i)-\tilde a(\xi_i,\widetilde X_i)\bigr)h\\
&\quad
+
\bigl(b(t_i,X_i)\Delta W_i-\tilde b(t_i,\widetilde X_i)\Delta\tilde W_i\bigr)\\
&\quad
+
\bigl(L_1b(t_i,X_i)I_i(W,W)-\widetilde{L_1b}(t_i,\widetilde X_i)I_i(\tilde W,\tilde W)\bigr).
\end{align*}
Using Lemma~\ref{lem:W-increment-decomposition}, we write
\[
\Delta\tilde W_i=\Delta W_i+\delta_3\Delta Z_i
\]
and
\[
I_i(\tilde W,\tilde W)=I_i(W,W)+\delta_3\Delta W_i\Delta Z_i+\frac{\delta_3^2}{2}(\Delta Z_i)^2.
\]
Substituting these identities and then adding and subtracting
\(a(\xi_i,\widetilde X_i)\), \(b(t_i,\widetilde X_i)\), and \(L_1b(t_i,\widetilde X_i)\),
we obtain exactly \eqref{eq:error-recursion}. Formula \eqref{eq:error-sum} follows by summation,
since \(e_0=0\).
\end{proof}

The remaining issue is to control the Wiener-noise terms
\(N_i^{(W,1)}\), \(N_i^{(W,2)}\), and \(N_i^{(W,3)}\).
For this purpose we study the process \(Z(t)=p_W(t,W(t))\) more carefully.

\begin{lemma}
\label{lem:Z-semi}
Let \(\tilde W\in\mathcal W(W,\delta_3)\), let \(p_W\in\widetilde{\mathcal K}\) be such that
\(Z(t)=p_W(t,W(t))\), and define
\begin{equation}
\label{eq:sigma-mu-def}
\sigma(t):=\partial_x p_W(t,W(t)),
\qquad
\mu(t):=\Bigl(\partial_t+\frac12\partial_{xx}\Bigr)p_W(t,W(t)).
\end{equation}
Then
\begin{equation}
\label{eq:sigma-mu-bounds}
|\sigma(t)|\le 1,
\qquad
|\mu(t)|\le \frac32,
\qquad t\in[0,T].
\end{equation}
Moreover,
\begin{equation}
\label{eq:Z-semimartingale}
\rd Z(t)=\sigma(t)\,\rd W(t)+\mu(t)\,\rd t.
\end{equation}

For every \(i=0,1,\dots,n-1\),
\begin{equation}
\label{eq:dZ-split}
\Delta Z_i = M_i^{(0)} + D_i^{(0)},
\end{equation}
where
\begin{align}
M_i^{(0)}
&:=
\int_{t_i}^{t_{i+1}} \sigma(s)\,\rd W(s), \label{eq:M0-def}\\
D_i^{(0)}
&:=
\int_{t_i}^{t_{i+1}} \mu(s)\,\rd s. \label{eq:D0-def}
\end{align}

For every \(i=0,1,\dots,n-1\) and \(t\in[t_i,t_{i+1}]\), let
\[
U_i(t):=Z(t)-Z(t_i),
\qquad
V_i(t):=W(t)-W(t_i).
\]
Then
\begin{equation}
\label{eq:Ui-sde}
U_i(t)=\int_{t_i}^t \sigma(s)\,\rd W(s)+\int_{t_i}^t \mu(s)\,\rd s.
\end{equation}
In addition,
\begin{equation}
\label{eq:Ui-square}
U_i(t)^2
=
2\int_{t_i}^t U_i(s)\sigma(s)\,\rd W(s)
+
\int_{t_i}^t \Bigl(2U_i(s)\mu(s)+\sigma(s)^2\Bigr)\,\rd s,
\end{equation}
and
\begin{equation}
\label{eq:ViUi-product}
V_i(t)U_i(t)
=
\int_{t_i}^t \Bigl(U_i(s)+V_i(s)\sigma(s)\Bigr)\,\rd W(s)
+
\int_{t_i}^t \Bigl(V_i(s)\mu(s)+\sigma(s)\Bigr)\,\rd s.
\end{equation}

Consequently,
\begin{equation}
\label{eq:DeltaZ-square-MD}
(\Delta Z_i)^2 = M_i^{(2)} + D_i^{(2)},
\end{equation}
where
\begin{align}
M_i^{(2)}
&:=
2\int_{t_i}^{t_{i+1}} U_i(s)\sigma(s)\,\rd W(s), \label{eq:M2-def}\\
D_i^{(2)}
&:=
\int_{t_i}^{t_{i+1}} \Bigl(2U_i(s)\mu(s)+\sigma(s)^2\Bigr)\,\rd s, \label{eq:D2-def}
\end{align}
and
\begin{equation}
\label{eq:DeltaWDeltaZ-MD}
\Delta W_i\,\Delta Z_i = M_i^{(1)} + D_i^{(1)},
\end{equation}
where
\begin{align}
M_i^{(1)}
&:=
\int_{t_i}^{t_{i+1}} \Bigl(U_i(s)+V_i(s)\sigma(s)\Bigr)\,\rd W(s), \label{eq:M1-def}\\
D_i^{(1)}
&:=
\int_{t_i}^{t_{i+1}} \Bigl(V_i(s)\mu(s)+\sigma(s)\Bigr)\,\rd s. \label{eq:D1-def}
\end{align}
\end{lemma}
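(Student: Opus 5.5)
The plan is to apply Itô's formula to the \(C^{1,2}\) function \(p_W\) and then to the relevant products on each subinterval. All the bounds needed come directly from the definition of \(\widetilde{\mathcal K}\).

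\emph{Bounds and semimartingale form.} Since \(|p_{W,x}|\le 1\), we get \(|\sigma(t)|\le 1\). Since \(|p_{W,t}|\le1\) and \(|p_{W,xx}|\le1\), we get \(|\mu(t)|\le 1+\tfrac12=\tfrac32\), which is \eqref{eq:sigma-mu-bounds}. Itô's formula for \(p_W(t,W(t))\), with \(p_W\in C^{1,2}\), gives \eqref{eq:Z-semimartingale}, as already noted in the remark after \eqref{eq:W-tilde-Z}. Integrating over \([t_i,t]\) yields \eqref{eq:Ui-sde}. Taking \(t=t_{i+1}\) gives \eqref{eq:dZ-split} with \(M_i^{(0)}\) and \(D_i^{(0)}\) as stated.

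\emph{Well-posedness of the integrals.} Because \(\sigma\) and \(\mu\) are bounded and progressively measurable, all stochastic integrals are well defined. The processes \(U_i\) and \(V_i\) are continuous and adapted with finite moments of all orders: \(V_i\) is a Brownian increment, and \(U_i\) is controlled via \eqref{eq:Ui-sde} together with BDG. Hence integrands such as \(U_i\sigma\) and \(U_i+V_i\sigma\) lie in \(L^2\), and the integrals \(M_i^{(1)}\) and \(M_i^{(2)}\) are genuine square-integrable martingale increments. This is the one point needing care, but it is routine.

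\emph{Square and product identities.} Apply the Itô product rule to \(U_i(t)^2\) on \([t_i,t]\), using \(U_i(t_i)=0\), \(\rd U_i=\sigma\,\rd W+\mu\,\rd s\) and \(\rd\langle U_i\rangle=\sigma^2\,\rd s\):
\[
\rd (U_i^2)=2U_i\sigma\,\rd W+\bigl(2U_i\mu+\sigma^2\bigr)\rd s .
\]
This is \eqref{eq:Ui-square}. For the product \(V_iU_i\), use \(V_i(t_i)=0\), \(\rd V_i=\rd W\) and \(\rd\langle V_i,U_i\rangle=\sigma\,\rd s\):
\[
\rd(V_iU_i)=U_i\,\rd W+V_i(\sigma\,\rd W+\mu\,\rd s)+\sigma\,\rd s ,
\]
which regroups to \eqref{eq:ViUi-product}.

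\emph{Conclusion.} Finally, evaluate at \(t=t_{i+1}\), where \(U_i(t_{i+1})=\Delta Z_i\) and \(V_i(t_{i+1})=\Delta W_i\). This gives \eqref{eq:DeltaZ-square-MD} and \eqref{eq:DeltaWDeltaZ-MD} with the stated \(M_i^{(2)},D_i^{(2)},M_i^{(1)},D_i^{(1)}\). No step is a genuine obstacle; beyond the integrability check, the remaining work is bookkeeping of the cross-variation terms.
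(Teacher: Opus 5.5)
Your proposal is correct and matches the paper's proof step for step: It\^o's formula for $p_W(t,W(t))$ together with the derivative bounds from $\widetilde{\mathcal K}$, integration over $[t_i,t]$, It\^o's formula for $U_i^2$ and the product rule for $V_iU_i$ with brackets $\sigma^2\,\rd s$ and $\sigma\,\rd s$, then evaluation at $t=t_{i+1}$. Your extra integrability paragraph is correct but not needed for the identities themselves, since continuous adapted integrands already suffice.
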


\begin{proof}
Identity \eqref{eq:Z-semimartingale} is a direct consequence of It\^o's formula applied to
\(p_W(t,W(t))\). Since \(p_W\in\widetilde{\mathcal K}\), we have
\[
|\partial_x p_W(t,x)|\le 1,
\qquad
|\partial_t p_W(t,x)|\le 1,
\qquad
|\partial_{xx}p_W(t,x)|\le 1,
\]
hence \eqref{eq:sigma-mu-bounds} follows immediately.

Formula \eqref{eq:dZ-split} is obtained by integrating \eqref{eq:Z-semimartingale} over \([t_i,t_{i+1}]\), and \eqref{eq:Ui-sde} follows by integrating \eqref{eq:Z-semimartingale} from \(t_i\) to \(t\).

Next, \eqref{eq:Ui-square} follows from It\^o's formula applied to the function \(x\mapsto x^2\)
and the semimartingale \(U_i\):
\[
\rd(U_i(t)^2)=2U_i(t)\,\rd U_i(t)+\rd\langle U_i\rangle_t.
\]
Since
\[
\rd U_i(t)=\sigma(t)\,\rd W(t)+\mu(t)\,\rd t,
\qquad
\rd\langle U_i\rangle_t=\sigma(t)^2\,\rd t,
\]
we obtain \eqref{eq:Ui-square}.

Similarly, \eqref{eq:ViUi-product} follows from the product rule
\[
\rd(V_i(t)U_i(t))=V_i(t)\,\rd U_i(t)+U_i(t)\,\rd V_i(t)+\rd\langle V_i,U_i\rangle_t.
\]
Here
\[
\rd V_i(t)=\rd W(t),
\qquad
\rd\langle V_i,U_i\rangle_t=\sigma(t)\,\rd t.
\]
This yields \eqref{eq:ViUi-product}.

Finally, \eqref{eq:DeltaZ-square-MD}--\eqref{eq:D1-def} are obtained by evaluating
\eqref{eq:Ui-square} and \eqref{eq:ViUi-product} at \(t=t_{i+1}\).
\end{proof}

The next lemma provides the conditional moment bounds needed in the stability analysis.

\begin{lemma}
\label{lem:Z-conditional-moments}
Let \(r\ge 2\). Then there exists a constant \(C_r<\infty\), depending only on
\(r\) and \(T\), such that for every \(i=0,1,\dots,n-1\),
\begin{align}
\mathbb E\Big[\sup_{t_i\le t\le t_{i+1}} |U_i(t)|^r \,\Big|\, \mathcal G_i\Big]
&\le C_r h^{r/2}, \label{eq:Ui-sup-moment}\\
\mathbb E\Big[\sup_{t_i\le t\le t_{i+1}} |V_i(t)|^r \,\Big|\, \mathcal G_i\Big]
&\le C_r h^{r/2}, \label{eq:Vi-sup-moment}
\end{align}
and
\begin{align}
\mathbb E\big[|\Delta Z_i|^r \mid \mathcal G_i\big] &\le C_r h^{r/2}, \label{eq:DeltaZ-cond}\\
\mathbb E\big[|M_i^{(1)}|^r \mid \mathcal G_i\big]
+
\mathbb E\big[|D_i^{(1)}|^r \mid \mathcal G_i\big]
&\le C_r h^r, \label{eq:M1D1-cond}\\
\mathbb E\big[|M_i^{(2)}|^r \mid \mathcal G_i\big]
+
\mathbb E\big[|D_i^{(2)}|^r \mid \mathcal G_i\big]
&\le C_r h^r. \label{eq:M2D2-cond}
\end{align}
\end{lemma}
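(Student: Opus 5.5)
The plan is to reduce every conditional estimate to an unconditional one by exploiting the structure of \(\mathcal G_i\). The processes \(\sigma,\mu,U_i,V_i\) on \([t_i,t_{i+1}]\) are functionals of \(W(t_i)\) and the increments \(W(t_i+\cdot)-W(t_i)\). The latter are independent of \(\mathcal G_i\), since \(W\) is a \(\{\mathcal G_t\}\)-Wiener process. Rather than freezing the past explicitly, I will work directly with conditional versions of the Burkholder--Davis--Gundy (BDG) and Hölder inequalities. These are valid for stochastic integrals \(\int_{t_i}^{t}\phi\,\rd W\) with \(\{\mathcal G_t\}\)-progressive integrands: apply BDG on \(A\in\mathcal G_i\) to \(\mathbf 1_A\int_{t_i}^{t}\phi\,\rd W\), which is again a stochastic integral. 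Thus
\[
\mathbb E\Big[\sup_{t_i\le t\le t_{i+1}}\Big|\int_{t_i}^t\phi\,\rd W\Big|^r\,\Big|\,\mathcal G_i\Big]\le c_r\,\mathbb E\Big[\Big(\int_{t_i}^{t_{i+1}}\phi^2\,\rd s\Big)^{r/2}\,\Big|\,\mathcal G_i\Big].
\]
The constants will depend only on \(r\) and \(T\), because \(|\sigma|\le1\) and \(|\mu|\le\frac32\) by Lemma~\ref{lem:Z-semi}, uniformly in \(p_W\).

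\textbf{Step 1: first-order moments.} For \eqref{eq:Vi-sup-moment}, \(V_i\) is a Brownian motion started at \(0\) at time \(t_i\) and independent of \(\mathcal G_i\), so Doob's (or BDG's) inequality gives \(c_rh^{r/2}\). For \eqref{eq:Ui-sup-moment}, I use the representation \eqref{eq:Ui-sde}. The martingale part is bounded by conditional BDG with \(\int\sigma^2\le h\), which gives \(c_r h^{r/2}\). The drift part is bounded pathwise by \(\frac32 h\le \frac32 T^{1/2}h^{1/2}\). Since \(|\Delta Z_i|=|U_i(t_{i+1})|\), estimate \eqref{eq:DeltaZ-cond} follows immediately from \eqref{eq:Ui-sup-moment}.

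\textbf{Step 2: second-order terms.} For \(M_i^{(1)}\), conditional BDG followed by Jensen in time gives
\[
\mathbb E[|M_i^{(1)}|^r\mid\mathcal G_i]\le c_r h^{r/2-1}\int_{t_i}^{t_{i+1}}\mathbb E[|U_i(s)+V_i(s)\sigma(s)|^r\mid\mathcal G_i]\,\rd s.
\]
Inserting Step 1 bounds the right-hand side by \(C_rh^{r/2-1}\cdot h\cdot h^{r/2}=C_rh^r\). The same argument with integrand \(2U_i\sigma\) handles \(M_i^{(2)}\). For the drift terms, \(|D_i^{(1)}|\le \frac32 h\sup|V_i| + h\) and \(|D_i^{(2)}|\le 3h\sup|U_i|+h\). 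The crucial contribution is the bounded term \(\int\sigma\,\rd s\) (respectively \(\int\sigma^2\,\rd s\)), which is of size \(h\) pathwise. Its \(r\)-th moment is therefore \(h^r\). The remaining parts are \(O(h^{3r/2})\le T^{r/2}h^r\) by Step 1.

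\textbf{Main obstacle.} The only delicate point is justifying the conditional BDG and Hölder steps rigorously with respect to the enlarged \(\sigma\)-field \(\mathcal G_i\), which contains the future-independent \(\xi\)'s. The indicator trick above handles this cleanly. Everything else is routine, and the estimates hold uniformly in \(p_W\in\widetilde{\mathcal K}\) because only the bounds \eqref{eq:sigma-mu-bounds} are used. In particular, the possibly large value \(Z(t_i)\) never enters, since only increments appear.
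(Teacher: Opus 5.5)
Your proof is correct and follows essentially the same route as the paper: conditional Burkholder--Davis--Gundy plus Jensen in time for the stochastic-integral parts, the bounds $|\sigma|\le 1$ and $|\mu|\le\frac32$ for the Lebesgue parts, and the first-order estimates \eqref{eq:Ui-sup-moment}--\eqref{eq:Vi-sup-moment} fed into the second-order ones. The only differences are cosmetic. You justify the conditional BDG inequality explicitly via the indicator trick on $A\in\mathcal G_i$, which the paper takes for granted. You also bound $D_i^{(1)},D_i^{(2)}$ pathwise through $\sup|V_i|$ and $\sup|U_i|$ rather than by H\"older in time.
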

\begin{proof}
By \eqref{eq:Ui-sde}, boundedness of \(\sigma\) and \(\mu\), the conditional
Burkholder--Davis--Gundy inequality, and Jensen's inequality,
\begin{align*}
\mathbb E\Big[\sup_{t_i\le t\le t_{i+1}} |U_i(t)|^r \,\Big|\, \mathcal G_i\Big]
&\le
C_r\,
\mathbb E\Big[\Big(\int_{t_i}^{t_{i+1}} |\sigma(s)|^2\,\rd s\Big)^{r/2}\,\Big|\,\mathcal G_i\Big]
+
C_r\Big(\int_{t_i}^{t_{i+1}} |\mu(s)|\,\rd s\Big)^r\\
&\le C_r h^{r/2},
\end{align*}
which proves \eqref{eq:Ui-sup-moment}. Estimate \eqref{eq:Vi-sup-moment} is the standard
conditional Brownian increment bound. Since \(\Delta Z_i=U_i(t_{i+1})\), \eqref{eq:DeltaZ-cond}
follows immediately from \eqref{eq:Ui-sup-moment}.

Next, using \eqref{eq:M1-def}, conditional BDG, \eqref{eq:Ui-sup-moment}, and \eqref{eq:Vi-sup-moment},
we obtain
\begin{align*}
\mathbb E\big[|M_i^{(1)}|^r \mid \mathcal G_i\big]
&\le
C_r\,
\mathbb E\Big[
\Big(
\int_{t_i}^{t_{i+1}}
|U_i(s)+V_i(s)\sigma(s)|^2\,\rd s
\Big)^{r/2}
\Bigm|
\mathcal G_i
\Big]\\
&\le
C_r\,h^{\frac r2-1}
\int_{t_i}^{t_{i+1}}
\mathbb E\Big[
|U_i(s)|^r+|V_i(s)|^r
\Bigm|
\mathcal G_i
\Big]\,\rd s\\
&\le C_r h^r.
\end{align*}
For \(D_i^{(1)}\), by \eqref{eq:D1-def}, \eqref{eq:sigma-mu-bounds}, Hölder's inequality,
and \eqref{eq:Vi-sup-moment},
\begin{align*}
\mathbb E\big[|D_i^{(1)}|^r \mid \mathcal G_i\big]
&\le
C_r\,h^{r-1}
\int_{t_i}^{t_{i+1}}
\mathbb E\Big[
1+|V_i(s)|^r
\Bigm|
\mathcal G_i
\Big]\,\rd s\\
&\le C_r h^r.
\end{align*}
This proves \eqref{eq:M1D1-cond}. The proof of \eqref{eq:M2D2-cond} is analogous,
using \eqref{eq:M2-def}, \eqref{eq:D2-def}, and \eqref{eq:Ui-sup-moment}.
\end{proof}

The following weighted maximal inequalities are the key tool for the treatment of the Wiener-noise terms.
They will be applied with \(\mathcal G_i\)-measurable weights generated by the scheme.

\begin{lemma}
\label{lem:weighted-increment-estimates}
Let \(r\ge 2\), and let \(\tilde W\in\mathcal W(W,\delta_3)\) with associated process \(Z\) as in \eqref{eq:W-tilde-Z}. Then there exists a constant \(C_r<\infty\), depending only on \(r\) and \(T\),
such that for every sequence \(\{Y_i\}_{i=0}^{n-1}\) of \(\mathcal G_i\)-measurable random variables,
the following estimates hold:
\begin{align}
\mathbb E\max_{1\le k\le n}
\left|
\sum_{i=0}^{k-1} Y_i\,\Delta W_i
\right|^r
&\le
C_r \sum_{i=0}^{n-1} h\,\mathbb E|Y_i|^r, \label{eq:weighted-dW}\\
\mathbb E\max_{1\le k\le n}
\left|
\sum_{i=0}^{k-1} Y_i\,I_i(W,W)
\right|^r
&\le
C_r \sum_{i=0}^{n-1} h\,\mathbb E|Y_i|^r, \label{eq:weighted-I}\\
\mathbb E\max_{1\le k\le n}
\left|
\sum_{i=0}^{k-1} Y_i\,\Delta Z_i
\right|^r
&\le
C_r \sum_{i=0}^{n-1} h\,\mathbb E|Y_i|^r, \label{eq:weighted-dZ}\\
\mathbb E\max_{1\le k\le n}
\left|
\sum_{i=0}^{k-1} Y_i\,\Delta W_i\,\Delta Z_i
\right|^r
&\le
C_r \sum_{i=0}^{n-1} h\,\mathbb E|Y_i|^r, \label{eq:weighted-dWdZ}\\
\mathbb E\max_{1\le k\le n}
\left|
\sum_{i=0}^{k-1} Y_i\,(\Delta Z_i)^2
\right|^r
&\le
C_r \sum_{i=0}^{n-1} h\,\mathbb E|Y_i|^r. \label{eq:weighted-dZ2}
\end{align}
\end{lemma}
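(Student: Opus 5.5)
The plan is to write every sum as a stochastic integral plus a drift-type sum. Each increment $\Delta W_i$, $I_i(W,W)$, $\Delta Z_i$, $\Delta W_i\Delta Z_i$, $(\Delta Z_i)^2$ is, by Lemma~\ref{lem:Z-semi}, a sum $M_i + D_i$, where $M_i=\int_{t_i}^{t_{i+1}} H_i(s)\,\rd W(s)$ is an Itô integral over $[t_i,t_{i+1}]$ with respect to the $\{\mathcal G_t\}$-Wiener process $W$, and $D_i$ is a Lebesgue integral. For $\Delta W_i$ we take $H_i\equiv1$, $D_i=0$. For $I_i(W,W)$ we take $H_i(s)=V_i(s)$, $D_i=0$. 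For $\Delta Z_i$ we use $M_i^{(0)},D_i^{(0)}$. For the products we use $M_i^{(1)},D_i^{(1)}$ and $M_i^{(2)},D_i^{(2)}$. Since $Y_i$ is $\mathcal G_i$-measurable, we bound
\[
\sum_{i<k}Y_iM_i=\int_0^{t_k}\Bigl(\sum_{i}Y_i\mathbf 1_{(t_i,t_{i+1}]}(s)H_i(s)\Bigr)\rd W(s)
\]
as a continuous $\{\mathcal G_t\}$-martingale evaluated at grid times. The maximum over $k$ is then dominated by the supremum over $t\in[0,T]$. We split
$\mathbb E\max_k|\sum Y_i(M_i+D_i)|^r\le 2^{r-1}(\mathbb E\max_k|\sum Y_iM_i|^r+\mathbb E\max_k|\sum Y_iD_i|^r)$.

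\textbf{Martingale part.} By BDG and then Hölder's inequality in time (with exponent $r/2\ge1$ over $[0,T]$),
\[
\mathbb E\sup_t\Bigl|\int_0^t\cdots\Bigr|^r\le C_r\,\mathbb E\Bigl(\sum_i|Y_i|^2\int_{t_i}^{t_{i+1}}|H_i|^2\rd s\Bigr)^{r/2}\le C_rT^{r/2-1}\sum_i\int_{t_i}^{t_{i+1}}\mathbb E\bigl[|Y_i|^r\,\mathbb E[|H_i(s)|^r\mid\mathcal G_i]\bigr]\rd s .
\]
Here we pull $|Y_i|^r$ out of the conditional expectation. For $\Delta W_i$ and $\Delta Z_i$ we have $|H_i|\le1$ by \eqref{eq:sigma-mu-bounds}, which gives $C_r\sum_i h\,\mathbb E|Y_i|^r$ directly. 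For the other three increments the conditional moments of $H_i(s)$ are $O(h^{r/2})$ by \eqref{eq:Ui-sup-moment}--\eqref{eq:Vi-sup-moment}, which is even better.

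\textbf{Drift part.} For $\Delta W_i$ and $I_i(W,W)$ this part is absent. For the others we use $\max_k|\sum_{i<k}Y_iD_i|\le\sum_i|Y_i||D_i|$ and the discrete Hölder inequality $(\sum_i|Y_iD_i|)^r\le n^{r-1}\sum_i|Y_i|^r|D_i|^r$. Conditioning on $\mathcal G_i$ and applying \eqref{eq:M1D1-cond}--\eqref{eq:M2D2-cond} gives $n^{r-1}\sum_i\mathbb E|Y_i|^r C_rh^r=C_rT^{r-1}\sum_i h\,\mathbb E|Y_i|^r$. For $D_i^{(0)}$ we have only the deterministic bound $|D_i^{(0)}|\le\tfrac32h$, and the same computation works. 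This is the step where the order matters: a drift of size $h$ per step summed over $n$ steps is only $O(1)$. That is precisely why the right-hand side is $\sum_i h\,\mathbb E|Y_i|^r$ and not something smaller.

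\textbf{Main obstacle.} The delicate point is justifying the conditional manipulations when $Y_i$ is not assumed integrable. I would first treat $Y_i$ bounded, or assume $\mathbb E|Y_i|^r<\infty$, since otherwise the estimate is trivial. Then I would pass to the general case by truncating $Y_i\mathbf 1_{\{|Y_i|\le m\}}$ and applying monotone convergence (or Fatou) on the left-hand side. I would also check that $\{\mathcal G_t\}$ preserves the Wiener property and that $U_i,V_i$ are $\{\mathcal G_t\}$-adapted, so that BDG applies; this follows from \eqref{eq:xi-independent}. The constants depend only on $r$ and $T$, as claimed.
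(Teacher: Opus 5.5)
Your proof is correct and follows essentially the same route as the paper. You decompose each increment via Lemma~\ref{lem:Z-semi} into an It\^o integral of a predictable piecewise integrand plus a Lebesgue drift term. The martingale part is handled by BDG, a time-H\"older/Jensen step, and conditioning on $\mathcal G_i$, and the drift part by the crude bound $n^{r-1}\sum_i \mathbb E\bigl[|Y_i|^r\,\mathbb E[|D_i|^r\mid\mathcal G_i]\bigr]$. The differences are only cosmetic: you use a single H\"older step over $[0,T]$ where the paper combines $(\sum a_i)^{r/2}\le n^{r/2-1}\sum a_i^{r/2}$ with Jensen on each subinterval, and your truncation argument is harmless but unnecessary, since every conditional expectation involved is of a nonnegative quantity.
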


\begin{proof}
We write \(\mathbf 1_i(s):=\mathbf 1_{[t_i,t_{i+1})}(s)\).

To prove \eqref{eq:weighted-dW}, we set
\[
H_W(s):=\sum_{i=0}^{n-1} Y_i\,\mathbf 1_i(s),
\qquad s\in[0,T].
\]
Since \(Y_i\) is \(\mathcal G_i\)-measurable, the process \(H_W\) is predictable with respect to
the filtration \(\{\mathcal G_t\}_{t\in[0,T]}\), where
\[
\mathcal G_t:=\sigma(\eta,\xi_0,\dots,\xi_{n-1})\vee\Sigma_t.
\]
Moreover,
\[
\sum_{i=0}^{k-1} Y_i\,\Delta W_i
=
\int_0^{t_k} H_W(s)\,\rd W(s),
\qquad k=1,\dots,n.
\]
Hence, by the Burkholder--Davis--Gundy inequality,
\begin{align*}
\mathbb E\max_{1\le k\le n}\left|\sum_{i=0}^{k-1}Y_i\,\Delta W_i\right|^r
&\le
\mathbb E\sup_{0\le t\le T}\left|\int_0^t H_W(s)\,\rd W(s)\right|^r\\
&\le
C_r\,\mathbb E\left(\int_0^T |H_W(s)|^2\,\rd s\right)^{r/2}\\
&=
C_r\,\mathbb E\left(\sum_{i=0}^{n-1} h\,|Y_i|^2\right)^{r/2}.
\end{align*}
Since \(r/2\ge 1\), Jensen's inequality gives
\[
\left(\sum_{i=0}^{n-1} h\,|Y_i|^2\right)^{r/2}
\le
T^{\frac r2-1}\sum_{i=0}^{n-1} h\,|Y_i|^r.
\]
Therefore,
\[
\mathbb E\max_{1\le k\le n}\left|\sum_{i=0}^{k-1}Y_i\,\Delta W_i\right|^r
\le
C_r\sum_{i=0}^{n-1} h\,\mathbb E|Y_i|^r.
\]

To prove \eqref{eq:weighted-I}, we recall that
\[
I_i(W,W)=\int_{t_i}^{t_{i+1}} V_i(s)\,\rd W(s),
\qquad
V_i(s):=W(s)-W(t_i).
\]
Hence
\[
\sum_{i=0}^{k-1}Y_i\,I_i(W,W)
=
\int_0^{t_k} H_I(s)\,\rd W(s),
\qquad
H_I(s):=\sum_{i=0}^{n-1} Y_i\,V_i(s)\,\mathbf 1_i(s).
\]
Again, by Burkholder--Davis--Gundy,
\[
\mathbb E\max_{1\le k\le n}\left|\sum_{i=0}^{k-1}Y_i\,I_i(W,W)\right|^r
\le
C_r\,\mathbb E\left(\sum_{i=0}^{n-1}|Y_i|^2\int_{t_i}^{t_{i+1}} |V_i(s)|^2\,\rd s\right)^{r/2}.
\]
Using \((\sum a_i)^{r/2}\le n^{r/2-1}\sum a_i^{r/2}\) for \(a_i\ge0\), we obtain
\[
\mathbb E\max_{1\le k\le n}\left|\sum_{i=0}^{k-1}Y_i\,I_i(W,W)\right|^r
\le
C_r\,n^{\frac r2-1}\sum_{i=0}^{n-1}
\mathbb E\left[
|Y_i|^r
\left(\int_{t_i}^{t_{i+1}} |V_i(s)|^2\,\rd s\right)^{r/2}
\right].
\]
Since \(Y_i\) is \(\mathcal G_i\)-measurable,
\[
\mathbb E\left[
|Y_i|^r
\left(\int_{t_i}^{t_{i+1}} |V_i(s)|^2\,\rd s\right)^{r/2}
\right]
=
\mathbb E\left[
|Y_i|^r
\,
\mathbb E\left[
\left(\int_{t_i}^{t_{i+1}} |V_i(s)|^2\,\rd s\right)^{r/2}
\Bigm|
\mathcal G_i
\right]
\right].
\]
By Jensen's inequality and \eqref{eq:Vi-sup-moment},
\begin{align*}
\mathbb E\left[
\left(\int_{t_i}^{t_{i+1}} |V_i(s)|^2\,\rd s\right)^{r/2}
\Bigm|
\mathcal G_i
\right]
&\le
h^{\frac r2-1}\int_{t_i}^{t_{i+1}}
\mathbb E\bigl[|V_i(s)|^r\mid\mathcal G_i\bigr]\,\rd s\\
&\le
h^{\frac r2-1}\int_{t_i}^{t_{i+1}}
\mathbb E\Big[\sup_{t_i\le u\le t_{i+1}}|V_i(u)|^r\Bigm|\mathcal G_i\Big]\,\rd s\\
&\le C_r h^r.
\end{align*}
Therefore,
\[
\mathbb E\max_{1\le k\le n}\left|\sum_{i=0}^{k-1}Y_i\,I_i(W,W)\right|^r
\le
C_r\,n^{\frac r2-1} h^r \sum_{i=0}^{n-1}\mathbb E|Y_i|^r.
\]
Since \(n^{\frac r2-1}h^r = T^{\frac r2-1}h^{1+\frac r2}\le C(T,r)\,h\), we get
\[
\mathbb E\max_{1\le k\le n}\left|\sum_{i=0}^{k-1}Y_i\,I_i(W,W)\right|^r
\le
C_r\sum_{i=0}^{n-1} h\,\mathbb E|Y_i|^r.
\]

For \eqref{eq:weighted-dZ}, we use \eqref{eq:dZ-split}, and we write
\[
\sum_{i=0}^{k-1} Y_i\,\Delta Z_i
=
\sum_{i=0}^{k-1} Y_i\,M_i^{(0)}
+
\sum_{i=0}^{k-1} Y_i\,D_i^{(0)}.
\]
For the martingale part we use
\[
\sum_{i=0}^{k-1} Y_i\,M_i^{(0)}
=
\int_0^{t_k} H_Z(s)\,\rd W(s),
\qquad
H_Z(s):=\sum_{i=0}^{n-1} Y_i\,\sigma(s)\,\mathbf 1_i(s).
\]
Since \(|\sigma(s)|\le 1\), the proof of \eqref{eq:weighted-dW} yields
\[
\mathbb E\max_{1\le k\le n}\left|\sum_{i=0}^{k-1} Y_i\,M_i^{(0)}\right|^r
\le
C_r\sum_{i=0}^{n-1} h\,\mathbb E|Y_i|^r.
\]
For the drift part, since \(|D_i^{(0)}|\le \int_{t_i}^{t_{i+1}} |\mu(s)|\,\rd s\le C h\), we have
\begin{align*}
\mathbb E\max_{1\le k\le n}\left|\sum_{i=0}^{k-1} Y_i\,D_i^{(0)}\right|^r
&\le
\mathbb E\left(\sum_{i=0}^{n-1}|Y_i|\,|D_i^{(0)}|\right)^r\\
&\le
n^{r-1}\sum_{i=0}^{n-1}\mathbb E\bigl[|Y_i|^r|D_i^{(0)}|^r\bigr]\\
&\le
C_r\,n^{r-1}h^r \sum_{i=0}^{n-1}\mathbb E|Y_i|^r\\
&=
C_r\,T^{r-1}\sum_{i=0}^{n-1} h\,\mathbb E|Y_i|^r.
\end{align*}
This proves \eqref{eq:weighted-dZ}.

By \eqref{eq:DeltaWDeltaZ-MD},
\[
\sum_{i=0}^{k-1} Y_i\,\Delta W_i\,\Delta Z_i
=
\sum_{i=0}^{k-1} Y_i\,M_i^{(1)}
+
\sum_{i=0}^{k-1} Y_i\,D_i^{(1)}.
\]
For the martingale part, define
\[
H_{12}(s):=\sum_{i=0}^{n-1}Y_i\bigl(U_i(s)+V_i(s)\sigma(s)\bigr)\mathbf 1_i(s).
\]
Then
\[
\sum_{i=0}^{k-1} Y_i\,M_i^{(1)}
=
\int_0^{t_k} H_{12}(s)\,\rd W(s).
\]
By Burkholder--Davis--Gundy,
\[
\mathbb E\max_{1\le k\le n}\left|\sum_{i=0}^{k-1} Y_i\,M_i^{(1)}\right|^r
\le
C_r\,
\mathbb E\left(
\sum_{i=0}^{n-1}|Y_i|^2
\int_{t_i}^{t_{i+1}} |U_i(s)+V_i(s)\sigma(s)|^2\,\rd s
\right)^{r/2}.
\]
As above,
\[
(\sum a_i)^{r/2}\le n^{r/2-1}\sum a_i^{r/2},
\]
and, conditionally on \(\mathcal G_i\),
\begin{align*}
\mathbb E\left[
\left(
\int_{t_i}^{t_{i+1}} |U_i(s)+V_i(s)\sigma(s)|^2\,\rd s
\right)^{r/2}
\Bigm|
\mathcal G_i
\right]
&\le
C_r h^{\frac r2-1}
\int_{t_i}^{t_{i+1}}
\mathbb E\bigl[|U_i(s)|^r+|V_i(s)|^r\mid\mathcal G_i\bigr]\,\rd s\\
&\le
C_r h^r
\end{align*}
by \eqref{eq:Ui-sup-moment}, \eqref{eq:Vi-sup-moment}, and \(|\sigma|\le 1\).
Thus
\[
\mathbb E\max_{1\le k\le n}\left|\sum_{i=0}^{k-1} Y_i\,M_i^{(1)}\right|^r
\le
C_r\sum_{i=0}^{n-1} h\,\mathbb E|Y_i|^r.
\]
For the drift part, \eqref{eq:M1D1-cond} implies
\[
\mathbb E\bigl[|D_i^{(1)}|^r\mid\mathcal G_i\bigr]\le C_r h^r.
\]
Therefore,
\begin{align*}
\mathbb E\max_{1\le k\le n}\left|\sum_{i=0}^{k-1} Y_i\,D_i^{(1)}\right|^r
&\le
n^{r-1}\sum_{i=0}^{n-1}
\mathbb E\bigl[|Y_i|^r|D_i^{(1)}|^r\bigr]\\
&=
n^{r-1}\sum_{i=0}^{n-1}
\mathbb E\Bigl[
|Y_i|^r\,\mathbb E[|D_i^{(1)}|^r\mid\mathcal G_i]
\Bigr]\\
&\le
C_r\,n^{r-1}h^r\sum_{i=0}^{n-1}\mathbb E|Y_i|^r
\le
C_r\sum_{i=0}^{n-1} h\,\mathbb E|Y_i|^r.
\end{align*}
This proves \eqref{eq:weighted-dWdZ}.

Finally, to prove \eqref{eq:weighted-dZ2}, we have from
\eqref{eq:DeltaZ-square-MD},
\[
\sum_{i=0}^{k-1} Y_i\,(\Delta Z_i)^2
=
\sum_{i=0}^{k-1} Y_i\,M_i^{(2)}
+
\sum_{i=0}^{k-1} Y_i\,D_i^{(2)}.
\]
For the martingale part we define
\[
H_{22}(s):=2\sum_{i=0}^{n-1}Y_i\,U_i(s)\sigma(s)\,\mathbf 1_i(s),
\]
so that
\[
\sum_{i=0}^{k-1} Y_i\,M_i^{(2)}
=
\int_0^{t_k} H_{22}(s)\,\rd W(s).
\]
Applying Burkholder--Davis--Gundy and arguing exactly as above, we obtain
\[
\mathbb E\max_{1\le k\le n}\left|\sum_{i=0}^{k-1} Y_i\,M_i^{(2)}\right|^r
\le
C_r\sum_{i=0}^{n-1} h\,\mathbb E|Y_i|^r,
\]
because
\[
\mathbb E\left[
\left(
\int_{t_i}^{t_{i+1}} |U_i(s)\sigma(s)|^2\,\rd s
\right)^{r/2}
\Bigm|
\mathcal G_i
\right]
\le
C_r h^r
\]
follows from \eqref{eq:Ui-sup-moment} and \(|\sigma|\le 1\).
For the drift part we use \eqref{eq:M2D2-cond} exactly as in the proof of
\eqref{eq:weighted-dWdZ} and get
\[
\mathbb E\max_{1\le k\le n}\left|\sum_{i=0}^{k-1} Y_i\,D_i^{(2)}\right|^r
\le
C_r\sum_{i=0}^{n-1} h\,\mathbb E|Y_i|^r.
\]
This proves \eqref{eq:weighted-dZ2}.
\end{proof}

We next derive a uniform moment bound for the auxiliary exact-information scheme.

\begin{proposition}
\label{prop:exact-scheme-moments}
Let \(r\ge 2\). Then there exists a constant \(C_r<\infty\), depending only on
\(r,T,K,\gamma_1,\gamma_2\), such that
\begin{equation}
\label{eq:exact-scheme-max-moment-bound}
\sup_{(a,b,\eta)\in\mathcal F(\gamma_1,\gamma_2,K)}
\sup_{n\in\mathbb N}
\mathbb E\max_{0\le i\le n}|X_i|^r
\le C_r.
\end{equation}
Consequently,
\begin{equation}
\label{eq:exact-scheme-moment-bound}
\sup_{(a,b,\eta)\in\mathcal F(\gamma_1,\gamma_2,K)}
\sup_{n\in\mathbb N}
\max_{0\le i\le n}\mathbb E|X_i|^r
\le C_r.
\end{equation}
\end{proposition}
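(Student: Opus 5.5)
We plan to write the exact scheme \eqref{eq:exact-rm} in summed form,
\[
X_k=\eta+\sum_{i=0}^{k-1}a(\xi_i,X_i)h+\sum_{i=0}^{k-1}b(t_i,X_i)\Delta W_i+\sum_{i=0}^{k-1}L_1b(t_i,X_i)I_i(W,W),
\qquad k=0,\dots,n,
\]
and run a discrete Gronwall argument on \(m_k:=\mathbb E\max_{0\le j\le k}|X_j|^r\). A preliminary point is that each \(m_k\) is finite. We get this by induction on \(i\): \(X_{i+1}\) is a polynomial expression in \(X_i\), \(\Delta W_i\), \(I_i(W,W)\) with linear-growth coefficients, by \eqref{eq:growth-base}. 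The Gaussian increments are independent of \(\mathcal G_i\), so all moments propagate. This finiteness justifies the Gronwall step and the absorption below.

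For fixed \(k\le n\) we use \((x_1+x_2+x_3+x_4)^r\le 4^{r-1}\sum x_j^r\) and bound each term separately.

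\textbf{Initial value.} We have \(|\eta|^r\le K^r\).

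\textbf{Drift sum.} By Hölder/Jensen and \eqref{eq:growth-base},
\[
\mathbb E\max_{j\le k}\Bigl|\sum_{i<j}a(\xi_i,X_i)h\Bigr|^r
\le T^{r-1}\sum_{i<k}h\,\mathbb E|a(\xi_i,X_i)|^r
\le C\sum_{i<k}h\,(1+\mathbb E|X_i|^r).
\]

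\textbf{Diffusion and Milstein sums.} We apply Lemma~\ref{lem:weighted-increment-estimates}, specifically \eqref{eq:weighted-dW} and \eqref{eq:weighted-I}, with weights \(Y_i:=b(t_i,X_i)\mathbf 1_{\{i<k\}}\) and \(Y_i:=L_1b(t_i,X_i)\mathbf 1_{\{i<k\}}\) respectively. These weights are \(\mathcal G_i\)-measurable because \(X_i\) is. Truncating at \(k\) gives the maximum over \(j\le k\). Together with \eqref{eq:growth-base}, each term is bounded by \(C_r\sum_{i<k}h(1+\mathbb E|X_i|^r)\). Note that the lemma's constant depends only on \(r,T\), so it is uniform in \(n\) and in the input.

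Combining these bounds gives
\[
m_k\le C\Bigl(1+\sum_{i=0}^{k-1}h\,m_i\Bigr),\qquad k=0,\dots,n,
\]
with \(C=C(r,T,K)\) independent of \(n\) and of \((a,b,\eta)\). The discrete Gronwall lemma then yields \(m_n\le Ce^{CT}\), which is \eqref{eq:exact-scheme-max-moment-bound}. The bound \eqref{eq:exact-scheme-moment-bound} follows at once, since \(\max_i\mathbb E|X_i|^r\le\mathbb E\max_i|X_i|^r\).

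\textbf{Main obstacle.} The only delicate point is uniformity in \(n\) of the Milstein term. A naive bound on \(L_1b\,I_i\) in terms of \((\Delta W_i)^2\) would lose the cancellation. This is exactly what \eqref{eq:weighted-I} provides: it is the martingale/BDG structure of \(I_i(W,W)\) that gives the factor \(h\) rather than \(1\). The other ingredient is that the constant in \eqref{eq:growth-base} depends only on \(K,T\). Everything else is routine.
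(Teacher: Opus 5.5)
Your proposal is correct and follows the paper's proof essentially step for step: the summed recursion, Jensen for the drift sum, \eqref{eq:weighted-dW} and \eqref{eq:weighted-I} for the diffusion and Milstein sums, then discrete Gronwall (the a priori finiteness check is a harmless extra, since the recursion only involves earlier indices). One correction to your closing remark: the Milstein sum is not the delicate one, because $\mathbb E[|I_i(W,W)|^r\mid\mathcal G_i]\le Ch^r$ already makes the crude triangle-inequality bound $k^{r-1}\sum_{i<k}\mathbb E|Y_i|^r\,Ch^r\le CT^{r-1}\sum_{i<k}h\,\mathbb E|Y_i|^r$ (with $Y_i=L_1b(t_i,X_i)$) uniform in $n$, whereas the BDG/martingale structure is indispensable for the diffusion sum $\sum_i b(t_i,X_i)\Delta W_i$, where the crude bound produces the factor $T^{r-1}h^{1-r/2}$ in place of $h$.
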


\begin{proof}
We fix \(n\in\mathbb N\) and define
\[
M_k:=\mathbb E\max_{0\le j\le k}|X_j|^r,
\qquad k=0,1,\dots,n.
\]
By summing the recursion \eqref{eq:exact-rm}, we obtain
\[
X_k
=
\eta
+
\sum_{i=0}^{k-1} a(\xi_i,X_i)\,h
+
\sum_{i=0}^{k-1} b(t_i,X_i)\,\Delta W_i
+
\sum_{i=0}^{k-1} L_1b(t_i,X_i)\,I_i(W,W).
\]
Hence, using \((x_1+\cdots+x_4)^r\le C_r\sum_{\ell=1}^4 |x_\ell|^r\),
\begin{align*}
M_k
&\le
C_r\,\mathbb E|\eta|^r
+
C_r\,\mathbb E\Big(
\sum_{i=0}^{k-1} |a(\xi_i,X_i)|\,h
\Big)^r\\
&\quad
+
C_r\,\mathbb E\max_{1\le m\le k}
\left|
\sum_{i=0}^{m-1} b(t_i,X_i)\,\Delta W_i
\right|^r
+
C_r\,\mathbb E\max_{1\le m\le k}
\left|
\sum_{i=0}^{m-1} L_1b(t_i,X_i)\,I_i(W,W)
\right|^r.
\end{align*}
Since \(|\eta|\le K\) a.s., Lemma~\ref{lem:elementary-growth} yields
\[
\mathbb E|\eta|^r \le K^r.
\]
For the drift term, using \eqref{eq:growth-base} and Jensen's inequality,
\begin{align*}
\mathbb E\Big(
\sum_{i=0}^{k-1} |a(\xi_i,X_i)|\,h
\Big)^r
&\le
T^{r-1}\sum_{i=0}^{k-1} h\,\mathbb E|a(\xi_i,X_i)|^r\\
&\le
C_r\sum_{i=0}^{k-1} h\,\bigl(1+\mathbb E|X_i|^r\bigr)
\le
C_r + C_r\sum_{i=0}^{k-1} h\,M_i.
\end{align*}
For the diffusion and Milstein terms, Lemma~\ref{lem:weighted-increment-estimates} and
\eqref{eq:growth-base} imply
\begin{align*}
\mathbb E\max_{1\le m\le k}
\left|
\sum_{i=0}^{m-1} b(t_i,X_i)\,\Delta W_i
\right|^r
&\le
C_r\sum_{i=0}^{k-1} h\,\mathbb E|b(t_i,X_i)|^r\\
&\le
C_r + C_r\sum_{i=0}^{k-1} h\,M_i,
\end{align*}
and
\begin{align*}
\mathbb E\max_{1\le m\le k}
\left|
\sum_{i=0}^{m-1} L_1b(t_i,X_i)\,I_i(W,W)
\right|^r
&\le
C_r\sum_{i=0}^{k-1} h\,\mathbb E|L_1b(t_i,X_i)|^r\\
&\le
C_r + C_r\sum_{i=0}^{k-1} h\,M_i.
\end{align*}
Consequently,
\[
M_k \le C_r + C_r\sum_{i=0}^{k-1} h\,M_i,
\qquad k=0,1,\dots,n.
\]
The discrete Gronwall lemma yields
\[
M_k\le C_r
\qquad\text{for all }k=0,1,\dots,n,
\]
with a constant independent of \(n\). This proves \eqref{eq:exact-scheme-max-moment-bound}.
Estimate \eqref{eq:exact-scheme-moment-bound} follows immediately.
\end{proof}

\begin{proposition}
\label{prop:stability-noisy}
Let \(r\ge 2\). Then there exists a constant \(C_r<\infty\), depending only on
\(r\), \(T\), \(K\), \(\gamma_1\), and \(\gamma_2\), such that for every
\((a,b,\eta)\in\mathcal F(\gamma_1,\gamma_2,K)\), every
\[
\tilde a\in V_a(\delta_1),\qquad
\tilde b\in V_b(\delta_2),\qquad
\widetilde{b_y}\in V_{b,b_y}(\delta_2),\qquad
\tilde W\in\mathcal W(W,\delta_3),
\]
and every \(n\in\mathbb N\),
\begin{equation}
\label{eq:stability-bound}
\mathbb E\max_{0\le i\le n}|X_i-\widetilde X_i|^r
\le
C_r\bigl(\delta_1^r+\delta_2^r+\delta_3^r\bigr).
\end{equation}
\end{proposition}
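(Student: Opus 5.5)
The plan is a discrete Gronwall argument for
\[
E_k:=\mathbb E\max_{0\le j\le k}|e_j|^r ,
\]
built on the error representation \eqref{eq:error-sum} of Proposition~\ref{prop:error-decomposition}. Fix \(k\le n\) and take \(\max_{1\le m\le k}\) of \(|e_m|^r\) in \eqref{eq:error-sum}. Using \((x_1+\dots+x_9)^r\le C_r\sum|x_\ell|^r\), \(E_k\) is bounded by \(C_r\) times the sum of nine terms of the form \(\mathbb E\max_{1\le m\le k}|\sum_{i<m}(\cdot)_i|^r\). Every weight that appears (differences of coefficients at \(X_i,\widetilde X_i\), and the noisy coefficients at \(\widetilde X_i\)) is \(\mathcal G_i\)-measurable, since \(\xi_i\) belongs to \(\mathcal G_0\). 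This lets us apply Lemma~\ref{lem:weighted-increment-estimates} directly.

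\textbf{Stability terms.}
\begin{itemize}
\item For \(S^{(a)}\), use (A3) and Jensen: \(\mathbb E(\sum_{i<k}K|e_i|h)^r\le C_r\sum_{i<k}hE_i\).
\item For \(S^{(b)}\), apply \eqref{eq:weighted-dW} with \(Y_i=b(t_i,X_i)-b(t_i,\widetilde X_i)\) and (B3); this gives \(C_r\sum_{i<k}h\,\mathbb E|e_i|^r\le C_r\sum_{i<k}hE_i\).
\item For \(S^{(L)}\), use \eqref{eq:weighted-I} together with (B7) in the same way.
\end{itemize}

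\textbf{Coefficient noise terms.}
\begin{itemize}
\item For \(N^{(a)}\), \eqref{eq:noise-a} and Jensen give \(C_r\delta_1^r\sum_i h(1+\mathbb E|\widetilde X_i|^r)\).
\item For \(N^{(b)}\) and \(N^{(L)}\), \eqref{eq:weighted-dW}, \eqref{eq:weighted-I}, \eqref{eq:noise-b} and \eqref{eq:noise-L1b} give \(C_r\delta_2^r\sum_i h(1+\mathbb E|\widetilde X_i|^r)\).
\end{itemize}

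\textbf{Wiener noise terms.}
\begin{itemize}
\item For \(N^{(W,1)}\), use \eqref{eq:weighted-dZ} with \(Y_i=-\delta_3\tilde b(t_i,\widetilde X_i)\) and \eqref{eq:growth-noisy}.
\item For \(N^{(W,2)}\), use \eqref{eq:weighted-dWdZ}.
\item For \(N^{(W,3)}\), use \eqref{eq:weighted-dZ2}. Since \(\delta_3\le1\), we have \(\delta_3^{2r}\le\delta_3^r\).
\end{itemize}
Together these yield \(C_r\delta_3^r\sum_i h(1+\mathbb E|\widetilde X_i|^r)\).

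The step I expect to be the main obstacle is that these noise bounds contain \(\mathbb E|\widetilde X_i|^r\), which must be bounded uniformly in \(n\) and in all admissible perturbations. Two routes are available. The first is to prove a moment bound for the noisy scheme analogous to Proposition~\ref{prop:exact-scheme-moments}: sum \eqref{eq:noisy-rm}, expand \(\Delta\tilde W_i\) and \(I_i(\tilde W,\tilde W)\) via Lemma~\ref{lem:W-increment-decomposition}, and bound every piece by Lemma~\ref{lem:weighted-increment-estimates} with the linear-growth bounds \eqref{eq:growth-noisy} (using \(\delta_j\le1\)), followed by discrete Gronwall. The second avoids this and is what I would do first: write \(|\widetilde X_i|^r\le C_r(|X_i|^r+|e_i|^r)\), use Proposition~\ref{prop:exact-scheme-moments} for \(X_i\), and absorb the \(|e_i|^r\) contribution into the Gronwall sum. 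Its coefficient is \(C_r(\delta_1^r+\delta_2^r+\delta_3^r)h\le C_rh\), which is harmless.

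One technical point remains: a priori finiteness of \(E_k\) is needed before applying Gronwall. This follows inductively in \(i\) for fixed \(n\) from linear growth and finite moments of Gaussian-type increments, or from the noisy moment bound of the first route. Combining all nine estimates gives
\[
E_k\le C_r(\delta_1^r+\delta_2^r+\delta_3^r)+C_r\sum_{i<k}hE_i ,
\]
and the discrete Gronwall lemma yields \(E_n\le C_re^{C_rT}(\delta_1^r+\delta_2^r+\delta_3^r)\), which is \eqref{eq:stability-bound}. The constants depend only on \(r,T,K,\gamma_1,\gamma_2\).
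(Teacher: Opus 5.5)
Your proposal is correct and is essentially the paper's proof. It uses the same nine-term split from Proposition~\ref{prop:error-decomposition} and the same applications of Lemma~\ref{lem:weighted-increment-estimates}, and your preferred ``second route'' (bounding $|\widetilde X_i|^r\le C_r(|X_i|^r+|e_i|^r)$ via Proposition~\ref{prop:exact-scheme-moments} and absorbing $\delta_j^r\le 1$) is exactly the paper's estimate \eqref{eq:Xtilde-growth-via-ex-e}, followed by discrete Gronwall; your a priori finiteness check is harmless but unnecessary, since the recursive bound involves only indices $i<k$ and $E_0=0$, so finiteness follows by induction from the inequality itself.
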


\begin{proof}
Define
\[
\mathcal E_k:=\mathbb E\max_{0\le j\le k}|e_j|^r,
\qquad k=0,1,\dots,n.
\]
By Proposition~\ref{prop:error-decomposition},
\[
e_k
=
\sum_{i=0}^{k-1}
\Big(
S_i^{(a)}+S_i^{(b)}+S_i^{(L)}
+
N_i^{(a)}+N_i^{(b)}+N_i^{(L)}
+
N_i^{(W,1)}+N_i^{(W,2)}+N_i^{(W,3)}
\Big).
\]
Hence, by the inequality \((x_1+\cdots+x_9)^r\le C_r\sum_{\ell=1}^9 |x_\ell|^r\),
\begin{equation}
\label{eq:E-split}
\mathcal E_k
\le
C_r\sum_{\star}
\mathbb E\max_{1\le m\le k}
\left|
\sum_{i=0}^{m-1} (\star)_i
\right|^r,
\end{equation}
where the sum extends over the nine families
\[
S^{(a)},\ S^{(b)},\ S^{(L)},\ N^{(a)},\ N^{(b)},\ N^{(L)},\ N^{(W,1)},\ N^{(W,2)},\ N^{(W,3)}.
\]

We estimate these terms one by one.

For \(S_i^{(a)}\), by Lipschitz continuity of \(a\),
\[
|S_i^{(a)}|\le K\,|e_i|\,h.
\]
Therefore, by Jensen's inequality,
\begin{align}
\mathbb E\max_{1\le m\le k}
\left|
\sum_{i=0}^{m-1} S_i^{(a)}
\right|^r
&\le
\mathbb E\Big(
\sum_{i=0}^{k-1} K|e_i|\,h
\Big)^r \notag\\
&\le
C_r\sum_{i=0}^{k-1} h\,\mathbb E|e_i|^r
\le
C_r\sum_{i=0}^{k-1} h\,\mathcal E_i. \label{eq:S-a-bound}
\end{align}

For \(S_i^{(b)}\), using the Lipschitz property of \(b\) and
Lemma~\ref{lem:weighted-increment-estimates},
\begin{align}
\mathbb E\max_{1\le m\le k}
\left|
\sum_{i=0}^{m-1} S_i^{(b)}
\right|^r
&\le
C_r\sum_{i=0}^{k-1} h\,\mathbb E|b(t_i,X_i)-b(t_i,\widetilde X_i)|^r \notag\\
&\le
C_r\sum_{i=0}^{k-1} h\,\mathbb E|e_i|^r
\le
C_r\sum_{i=0}^{k-1} h\,\mathcal E_i. \label{eq:S-b-bound}
\end{align}

Similarly, since \(L_1b\) is Lipschitz in the spatial variable by assumption,
Lemma~\ref{lem:weighted-increment-estimates} yields
\begin{align}
\mathbb E\max_{1\le m\le k}
\left|
\sum_{i=0}^{m-1} S_i^{(L)}
\right|^r
&\le
C_r\sum_{i=0}^{k-1} h\,\mathbb E|L_1b(t_i,X_i)-L_1b(t_i,\widetilde X_i)|^r \notag\\
&\le
C_r\sum_{i=0}^{k-1} h\,\mathbb E|e_i|^r
\le
C_r\sum_{i=0}^{k-1} h\,\mathcal E_i. \label{eq:S-L-bound}
\end{align}

Before bounding the specific noise terms, let us note a crucial algebraic simplification. 
Since the precision parameters satisfy $\delta_j \in [0,1]$ for $j \in \{1,2,3\}$, 
we have $\delta_j^r \le 1$. Whenever a term containing $\delta_j^r \sum h \mathcal E_i$ 
appears in our estimates, we bound it from above by simply absorbing $\delta_j^r$ 
into the generic constant $C_r$. This allows us to keep the implicit constant 
in the discrete Gronwall inequality independent of the noise levels.

Since \(|\widetilde X_i|\le |X_i|+|e_i|\), Proposition~\ref{prop:exact-scheme-moments} implies
\begin{equation}
\label{eq:Xtilde-growth-via-ex-e}
\mathbb E(1+|\widetilde X_i|)^r
\le
C_r\bigl(1+\mathbb E|X_i|^r+\mathbb E|e_i|^r\bigr)
\le
C_r\bigl(1+\mathcal E_i\bigr).
\end{equation}

For \(N_i^{(a)}\), estimate \eqref{eq:noise-a} and Jensen's inequality imply
\begin{align}
\mathbb E\max_{1\le m\le k}
\left|
\sum_{i=0}^{m-1} N_i^{(a)}
\right|^r
&\le
\mathbb E\Big(
\sum_{i=0}^{k-1} \delta_1(1+|\widetilde X_i|)\,h
\Big)^r \notag\\
&\le
C_r\,\delta_1^r \sum_{i=0}^{k-1} h\,\mathbb E(1+|\widetilde X_i|)^r \notag\\
&\le
C_r\,\delta_1^r
+
C_r\sum_{i=0}^{k-1} h\,\mathcal E_i. \label{eq:N-a-bound}
\end{align}

For \(N_i^{(b)}\), using \eqref{eq:noise-b},
\eqref{eq:Xtilde-growth-via-ex-e}, and \eqref{eq:weighted-dW},
\begin{align}
\mathbb E\max_{1\le m\le k}
\left|
\sum_{i=0}^{m-1} N_i^{(b)}
\right|^r
&\le
C_r\sum_{i=0}^{k-1} h\,\mathbb E|b(t_i,\widetilde X_i)-\tilde b(t_i,\widetilde X_i)|^r \notag\\
&\le
C_r\,\delta_2^r \sum_{i=0}^{k-1} h\,\mathbb E(1+|\widetilde X_i|)^r \notag\\
&\le
C_r\,\delta_2^r
+
C_r\sum_{i=0}^{k-1} h\,\mathcal E_i. \label{eq:N-b-bound}
\end{align}

For \(N_i^{(L)}\), using \eqref{eq:noise-L1b},
\eqref{eq:Xtilde-growth-via-ex-e}, and \eqref{eq:weighted-I},
\begin{align}
\mathbb E\max_{1\le m\le k}
\left|
\sum_{i=0}^{m-1} N_i^{(L)}
\right|^r
&\le
C_r\sum_{i=0}^{k-1} h\,\mathbb E|L_1b(t_i,\widetilde X_i)-\widetilde{L_1b}(t_i,\widetilde X_i)|^r \notag\\
&\le
C_r\,\delta_2^r \sum_{i=0}^{k-1} h\,\mathbb E(1+|\widetilde X_i|)^r \notag\\
&\le
C_r\,\delta_2^r
+
C_r\sum_{i=0}^{k-1} h\,\mathcal E_i. \label{eq:N-L-bound}
\end{align}

Using \eqref{eq:growth-noisy}, \eqref{eq:Xtilde-growth-via-ex-e}, and \eqref{eq:weighted-dZ},
we obtain
\begin{align}
\mathbb E\max_{1\le m\le k}
\left|
\sum_{i=0}^{m-1} N_i^{(W,1)}
\right|^r
&\le
C_r\,\delta_3^r \sum_{i=0}^{k-1} h\,\mathbb E|\tilde b(t_i,\widetilde X_i)|^r \notag\\
&\le
C_r\,\delta_3^r
+
C_r\sum_{i=0}^{k-1} h\,\mathcal E_i. \label{eq:W1-bound}
\end{align}

Similarly, using \eqref{eq:weighted-dWdZ},
\begin{align}
\mathbb E\max_{1\le m\le k}
\left|
\sum_{i=0}^{m-1} N_i^{(W,2)}
\right|^r
&\le
C_r\,\delta_3^r \sum_{i=0}^{k-1} h\,\mathbb E|\widetilde{L_1b}(t_i,\widetilde X_i)|^r \notag\\
&\le
C_r\,\delta_3^r
+
C_r\sum_{i=0}^{k-1} h\,\mathcal E_i. \label{eq:W2-bound}
\end{align}

Finally, by \eqref{eq:weighted-dZ2},
\begin{align}
\mathbb E\max_{1\le m\le k}
\left|
\sum_{i=0}^{m-1} N_i^{(W,3)}
\right|^r
&\le
C_r\,\delta_3^{2r} \sum_{i=0}^{k-1} h\,\mathbb E|\widetilde{L_1b}(t_i,\widetilde X_i)|^r \notag\\
&\le
C_r\,\delta_3^{2r}
+
C_r\sum_{i=0}^{k-1} h\,\mathcal E_i \notag\\
&\le
C_r\,\delta_3^r
+
C_r\sum_{i=0}^{k-1} h\,\mathcal E_i, \label{eq:W3-bound}
\end{align}
since \(\delta_3\in[0,1]\).

\medskip
Combining \eqref{eq:E-split} with \eqref{eq:S-a-bound}--\eqref{eq:W3-bound}, we arrive at
\[
\mathcal E_k
\le
C_r\sum_{i=0}^{k-1} h\,\mathcal E_i
+
C_r\bigl(\delta_1^r+\delta_2^r+\delta_3^r\bigr),
\qquad k=0,1,\dots,n.
\]
The discrete Gronwall lemma now yields
\[
\mathcal E_k
\le
C_r\bigl(\delta_1^r+\delta_2^r+\delta_3^r\bigr),
\qquad k=0,1,\dots,n,
\]
with a constant independent of \(n\). In particular, \eqref{eq:stability-bound} follows for \(k=n\).
\end{proof}

As an immediate consequence, we obtain the perturbation part of the final upper bound.

\begin{corollary}
\label{cor:noise-upper}
Let \(r\ge 2\). Under the assumptions of Proposition~\ref{prop:stability-noisy},
\begin{equation}
\label{eq:noise-upper}
\left\|
X_n^{RM}-\widetilde X_n^{RM}
\right\|_r
\le
C_r\,(\delta_1+\delta_2+\delta_3),
\end{equation}
where
\[
X_n^{RM}:=X^{RM}_{n,n},
\qquad
\widetilde X_n^{RM}:=\widetilde X^{RM}_{n,n}.
\]
\end{corollary}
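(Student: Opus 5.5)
The plan is to deduce the corollary directly from Proposition~\ref{prop:stability-noisy}, with no new probabilistic estimates. By definition, \(X_n^{RM}-\widetilde X_n^{RM}=X_n-\widetilde X_n=e_n\), and trivially
\[
|e_n|^r\le \max_{0\le i\le n}|X_i-\widetilde X_i|^r .
\]
Taking expectations and applying \eqref{eq:stability-bound}, I obtain
\[
\mathbb E|X_n^{RM}-\widetilde X_n^{RM}|^r\le C_r\bigl(\delta_1^r+\delta_2^r+\delta_3^r\bigr),
\]
with \(C_r\) depending only on \(r,T,K,\gamma_1,\gamma_2\). The bound is uniform in \(n\), in \((a,b,\eta)\in\mathcal F(\gamma_1,\gamma_2,K)\), and in the admissible perturbations \(\tilde a,\tilde b,\widetilde{b_y},\tilde W\).

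The next step is to take \(r\)-th roots. Since \(\delta_j\ge0\),
\[
\bigl(\delta_1^r+\delta_2^r+\delta_3^r\bigr)^{1/r}\le \delta_1+\delta_2+\delta_3 .
\]
This follows, for instance, from \(\delta_j^r\le(\delta_1+\delta_2+\delta_3)^r\) for each \(j\) combined with subadditivity of \(x\mapsto x^{1/r}\); alternatively, one may use \(\|\cdot\|_{\ell^r}\le\|\cdot\|_{\ell^1}\) on \(\mathbb R^3\). Hence
\[
\|X_n^{RM}-\widetilde X_n^{RM}\|_r\le C_r^{1/r}(\delta_1+\delta_2+\delta_3),
\]
and I rename \(C_r^{1/r}\) as \(C_r\). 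This gives \eqref{eq:noise-upper}.

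I do not expect any real obstacle, since all the analytic work is contained in Proposition~\ref{prop:stability-noisy}. The only care needed is bookkeeping: the constant must not depend on \(n\) or on the noise levels, and that independence is already guaranteed by the proposition.
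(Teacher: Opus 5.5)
Your proposal is correct and follows essentially the same route as the paper: bound \(|e_n|^r\) by the running maximum, apply Proposition~\ref{prop:stability-noisy}, and take \(r\)-th roots. The only cosmetic difference is that the paper cites \(\delta_j\in[0,1]\) in the last step. Your inequality \(\bigl(\delta_1^r+\delta_2^r+\delta_3^r\bigr)^{1/r}\le\delta_1+\delta_2+\delta_3\) (the \(\ell^r\le\ell^1\) bound) shows that this restriction is not needed there.
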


\begin{proof}
By Proposition~\ref{prop:stability-noisy},
\[
\left\|X_n^{RM}-\widetilde X_n^{RM}\right\|_r^r
\le
\mathbb E\max_{0\le i\le n}|X_i-\widetilde X_i|^r
\le
C_r\bigl(\delta_1^r+\delta_2^r+\delta_3^r\bigr).
\]
Taking the \(r\)-th root and using \(\delta_j\in[0,1]\) gives \eqref{eq:noise-upper}.
\end{proof}

Let
\[
\widehat K:=\max\{K,K^{2r}\}.
\]
The assumptions defining \(\mathcal F(\gamma_1,\gamma_2,K)\) are at least as strong as the assumptions required in \cite[Proposition~1]{PMPP19}, applied with the moment parameter \(q=r\) and the class constant \(\widehat K\). Indeed, the coefficient conditions on \(a\), \(b\), and \(L_1b\) match the required spatial Lipschitz and temporal H\"older assumptions, and
\[
\mathbb E|\eta|^{2r}\le K^{2r}\le\widehat K.
\]
Moreover, the auxiliary exact-information scheme \eqref{eq:exact-rm} coincides at the grid points with the randomized Milstein scheme covered by \cite[Proposition~1]{PMPP19}. Consequently,
\begin{equation}
\label{eq:exact-info-rate}
\sup_{(a,b,\eta)\in\mathcal F(\gamma_1,\gamma_2,K)}
\left\|
X^{a,b,\eta}(T)-X_n^{RM}
\right\|_r
\le
C_r\,n^{-\min\{\gamma_1+\frac12,\gamma_2\}}.
\end{equation}
Combining \eqref{eq:noise-upper} and \eqref{eq:exact-info-rate}, the noisy randomized Milstein scheme satisfies the global upper bound
\begin{multline}
\label{eq:final-upper-alg}
\sup_{(a,b,\eta)\in\mathcal F(\gamma_1,\gamma_2,K)}
\ \sup_{\tilde a\in V_a(\delta_1)}
\ \sup_{\tilde b\in V_b(\delta_2)}
\ \sup_{\widetilde{b_y}\in V_{b,b_y}(\delta_2)}
\ \sup_{\tilde W\in\mathcal W(W,\delta_3)}
\left\|
X^{a,b,\eta}(T)-\widetilde X_n^{RM}
\right\|_r
\le \\
C_r\Bigl(
n^{-\min\{\gamma_1+\frac12,\gamma_2\}}
+\delta_1+\delta_2+\delta_3
\Bigr).
\end{multline}
Since \(\mathcal A_n^{RM,\mathrm{noisy}}\in\Phi_{4n+1}\), this implies
\begin{equation}
\label{eq:final-upper-minimal}
e_{4n+1}^{(r)}\bigl(\mathcal F(\gamma_1,\gamma_2,K),\delta_1,\delta_2,\delta_3\bigr)
\le
C_r\Bigl(
n^{-\min\{\gamma_1+\frac12,\gamma_2\}}
+\delta_1+\delta_2+\delta_3
\Bigr).
\end{equation}

\section{Lower bounds and optimality}\label{sec:lower}

Set
\begin{equation}
\label{eq:alpha-def-lower}
\alpha:=\min\left\{\gamma_1+\frac12,\gamma_2\right\}.
\end{equation}
We first record the part of the lower bound that is already contained in the standard noisy-coefficient theory. The result below is a direct consequence of \cite[Lemma~3]{PMPP19}. Its proof in that paper uses the usual deterministic-integration and It\^o-integration subproblems, together with constant perturbations of the drift and diffusion information. These subproblems are contained in our present class; the additional oracle for \(b_y\) is identically zero on them and hence carries no additional information.

\begin{proposition}
\label{prop:lower-PMPP19}
There exist constants \(c_{12}>0\) and \(N_{12}\in\mathbb N\), depending only on \(K,T,\gamma_1,\gamma_2\), such that for all \(N\ge N_{12}\) and all \(\delta_1,\delta_2,\delta_3\in[0,1]\),
\begin{equation}
\label{eq:lower-PMPP19}
e_N^{(2)}\bigl(\mathcal F(\gamma_1,\gamma_2,K),\delta_1,\delta_2,\delta_3\bigr)
\ge
c_{12}\,\max\bigl\{N^{-\alpha},\delta_1,\delta_2\bigr\}.
\end{equation}
\end{proposition}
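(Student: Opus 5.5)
We reduce Proposition~\ref{prop:lower-PMPP19} to \cite[Lemma~3]{PMPP19} in two steps. First we show that the present model contains the old one. Then we carry over the three separate lower bounds $N^{-\alpha}$, $\delta_1$, $\delta_2$, and combine them using $\max\{x,y,z\}\le x+y+z\le 3\max\{x,y,z\}$.

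\emph{Embedding step.} For the lower bound we may restrict the supremum in \eqref{eq:error-class} to a subclass of inputs. We may also fix particular admissible perturbations, since this only decreases the local worst-case error \eqref{eq:error-single-input}. We take $p_W\equiv0$, which is in $\widetilde{\mathcal K}$, so that $\tilde W=W$; then $\delta_3$ plays no role and the Wiener observations are exact.

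We restrict to the subproblems used in \cite{PMPP19}:
\begin{itemize}
\item drift-only problems $b\equiv0$, whose solution is $X(T)=\eta+\int_0^T a(t,\cdot)\,\rd t$, with $a$ depending only on $t$ (or affinely on $x$ where needed);
\item pure It\^o-integration problems $a\equiv0$, $b(t,x)=g(t)$ with $g$ being $\gamma_2$-H\"older.
\end{itemize}
We must check that these coefficients lie in $\mathcal A_K^{\gamma_1}$ and $\mathcal B_K^{\gamma_2}$, after rescaling by a constant if needed. For $b$ independent of $x$, we have $b_y\equiv0$ and $L_1b\equiv0$, so (B5)--(B8) hold trivially.

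On these subproblems we choose the derivative-oracle perturbation $q_b\equiv0$. Then $\widetilde{b_y}\equiv0$ is a known constant, and its evaluations carry no information. Consequently any $\mathcal A\in\Phi_N$ induces an algorithm in the \cite{PMPP19} model. It uses at most $N$ evaluations, its adaptive spatial rules and output map are obtained by plugging zeros into the $\mathbf b_{y,j}$ slots of $\psi_j$ and $\varphi$, and it has the same error.

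\emph{Transfer of the bounds.}
\begin{itemize}
\item The $N^{-\alpha}$ bound follows from \cite[Lemma~3]{PMPP19}. The $\gamma_1+\tfrac12$ part comes from randomized integration of $\gamma_1$-H\"older functions; the $\gamma_2$ part comes from It\^o integration of $\gamma_2$-H\"older integrands with observed $W$ values.
\item The $\delta_1$ bound uses two inputs, $a^{\pm}\equiv\pm c$ with $c\le K$. We pick the noisy drifts $\tilde a^{\pm}=a^{\pm}\mp \delta_1\cdot c'$ so that $\tilde a^+=\tilde a^-\equiv 0$; this needs $|p_a|\le1\le 1+|x|$, which forces $c=\delta_1/2$ up to the choice of constants. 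The information is then identical while the solutions differ by $2cT\gtrsim\delta_1$, so the triangle inequality gives error $\ge c\delta_1$.
\item The $\delta_2$ bound is analogous with $b^\pm\equiv\pm\delta_2/2$ and $\tilde b\equiv0$. The solutions differ by $\delta_2 W(T)$, whose $L^2$-norm is $\delta_2\sqrt T$. With $\tilde b\equiv 0$ the information still contains $W$, but no algorithm can output both $\eta+\tfrac{\delta_2}{2}W(T)$ and $\eta-\tfrac{\delta_2}{2}W(T)$ from the same data. Half their distance gives error $\ge \tfrac{\delta_2}{2}\sqrt T$.
\end{itemize}
Since $\|\cdot\|_2$ is used, the triangle-inequality argument applies directly.

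\emph{Main obstacle.} The main point to check is that the \cite{PMPP19} information model matches ours: in particular, that randomized evaluation times for $a$ and deterministic ones for $b$ and $W$, together with cost accounting, are at least as permissive there. Our cost $3i_1+i_2$ is only more expensive, which helps the lower bound. Adaptive choice of the $W$ observation times is not allowed here, so our class is smaller. We also verify that the fooling functions satisfy (A4)--(A6) and (B4)--(B8) with the given $K$; constant multiples depending only on $K$ can be absorbed into $c_{12}$ and $N_{12}$.
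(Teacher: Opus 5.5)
Your proposal is correct and follows the paper's route: you set $p_W\equiv0$, restrict to the subproblems of \cite{PMPP19} where $b$ does not depend on $x$ (so the $b_y$-oracle returns the known value $0$ and carries no information), and then invoke \cite[Lemma~3]{PMPP19}. The only difference is that you write out the constant-perturbation fooling pairs for $\delta_1$ and $\delta_2$ explicitly, which is correct with constants such as $c=\min\{K,1\}\delta_1/2$; the paper instead cites these pairs from \cite{PMPP19}.
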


\begin{proof}
Since \(p_W\equiv0\) is admissible in \(\mathcal W(W,\delta_3)\), and since exact derivative information \(\widetilde{b_y}=b_y\) is admissible in \(V_{b,b_y}(\delta_2)\), the present worst-case problem contains the noisy-coefficient problem considered in \cite[Lemma~3]{PMPP19} as a subproblem. More precisely, the lower bounds generated there by the drift quadrature subproblem, the stochastic It\^o integration subproblem, and the constant perturbations of \(a\) and \(b\) apply verbatim to our class of standard-information algorithms. Therefore, for \(q=2\),
\[
e_N^{(2)}\bigl(\mathcal F(\gamma_1,\gamma_2,K),\delta_1,\delta_2,\delta_3\bigr)
\ge
c_{12}\,\max\bigl\{N^{-\alpha},\delta_1,\delta_2\bigr\}
\]
for all sufficiently large \(N\), after adjusting the constant to the present parameters of the class.
\end{proof}

The lower bound proportional to \(\delta_2\) should be interpreted as a lower bound for the
diffusion-information channel. It is already obtained by perturbing the diffusion coefficient
\(b\) itself. We do not claim that perturbations of the derivative oracle \(b_y\) alone necessarily
produce a minimax error of order \(\delta_2\).

We now prove the only lower-bound contribution that is not covered by the noisy-coefficient model, namely the contribution of the perturbation of the observed Wiener path.

\begin{lemma}
\label{lem:LeCam-constant-gap}
Let \(Q_0,Q_1\) be probability measures on a measurable space \((\mathcal X,\mathcal B)\). Let \(\vartheta_0,\vartheta_1:\mathcal X\to\mathbb R\) be measurable and suppose that
\[
\vartheta_0(x)-\vartheta_1(x)\equiv \Delta,
\qquad x\in\mathcal X,
\]
for some \(\Delta>0\). Then, for every measurable estimator \(Y:\mathcal X\to\mathbb R\),
\[
\max\bigl\{\|Y-\vartheta_0\|_{2,Q_0},\ \|Y-\vartheta_1\|_{2,Q_1}\bigr\}
\ge
\frac{\Delta}{2}\bigl(1-\TV(Q_0,Q_1)\bigr),
\]
where
\[
\TV(Q_0,Q_1):=\sup_{A\in\mathcal B}|Q_0(A)-Q_1(A)|.
\]
\end{lemma}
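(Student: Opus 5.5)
The plan is the classical two-point (Le Cam) argument. I pass from $L^2$ to $L^1$, use the deterministic constant gap $\Delta$ to split the two risks, and then compare the two measures through a common dominating measure.

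By Lyapunov's inequality, $\|Y-\vartheta_j\|_{2,Q_j}\ge \|Y-\vartheta_j\|_{1,Q_j}$ for $j=0,1$. So it suffices to show
\[
\int |Y-\vartheta_0|\,\rd Q_0+\int |Y-\vartheta_1|\,\rd Q_1\ \ge\ \Delta\bigl(1-\TV(Q_0,Q_1)\bigr),
\]
because the maximum of two numbers is at least their average, and the average of the left-hand side is then at least $\frac{\Delta}{2}(1-\TV(Q_0,Q_1))$.

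To prove the displayed inequality, I introduce a dominating measure $\nu:=Q_0+Q_1$ and the densities $f_j:=\rd Q_j/\rd\nu$. Pointwise, $f_0\ge \min\{f_0,f_1\}$ and $f_1\ge \min\{f_0,f_1\}$. This gives
\[
\int |Y-\vartheta_0| f_0\,\rd\nu+\int |Y-\vartheta_1| f_1\,\rd\nu
\ge \int \bigl(|Y-\vartheta_0|+|Y-\vartheta_1|\bigr)\min\{f_0,f_1\}\,\rd\nu .
\]
By the triangle inequality, $|Y-\vartheta_0|+|Y-\vartheta_1|\ge|\vartheta_0-\vartheta_1|=\Delta$ at every point $x$. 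It is exactly here that the hypothesis that the gap is constant in $x$ is used. Hence the right-hand side is at least $\Delta\int\min\{f_0,f_1\}\,\rd\nu$.

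It remains to identify $\int\min\{f_0,f_1\}\,\rd\nu$. I use the standard identity $\int\min\{f_0,f_1\}\,\rd\nu = 1-\frac12\int|f_0-f_1|\,\rd\nu$. The supremum defining $\TV(Q_0,Q_1)$ is attained at $A=\{f_0>f_1\}$, where it equals $\int_A(f_0-f_1)\,\rd\nu=\frac12\int|f_0-f_1|\,\rd\nu$. Therefore $\int\min\{f_0,f_1\}\,\rd\nu=1-\TV(Q_0,Q_1)$, which concludes the proof.

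I expect no real obstacle; the main care points are minor. First, $Y$ may fail to be in $L^2$, in which case the inequality is trivial. Second, all integrands are nonnegative, so no integrability issues arise in the pointwise comparison.
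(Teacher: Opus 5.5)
Your proof is correct and follows essentially the same route as the paper. Both arguments reduce to the $L^1$ risks, integrate the pointwise triangle inequality $|Y-\vartheta_0|+|Y-\vartheta_1|\ge\Delta$ against the overlap density $\min\{f_0,f_1\}$ with respect to $Q_0+Q_1$, identify its total mass as $1-\TV(Q_0,Q_1)$, and finish with ``maximum $\ge$ average''; you additionally justify the density representation of $\TV$ (via the set $\{f_0>f_1\}$), which the paper simply cites as standard.
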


\begin{proof}
We first prove the corresponding $L^1$ bound and then use
$\|Z\|_{2,Q}\ge \E_Q|Z|$.
Let $\mu:=Q_0+Q_1$ and let $q_i:=\rd Q_i/\rd\mu$.
Define the overlap measure $M$ by
\[
\rd M:=\min\{q_0,q_1\}\,\rd\mu.
\]
Then
\[
M(\mathcal X)=\int \min\{q_0,q_1\}\,\rd\mu.
\]
Moreover, since
\[
q_0+q_1 = |q_0-q_1| + 2\min\{q_0,q_1\},
\]
integration with respect to \(\mu\) yields
\[
2
=
\int (q_0+q_1)\,\rd\mu
=
\int |q_0-q_1|\,\rd\mu
+
2\int \min\{q_0,q_1\}\,\rd\mu.
\]
Using the standard density representation of the total variation distance,
\[
\TV(Q_0,Q_1)=\frac12\int |q_0-q_1|\,\rd\mu,
\]
we obtain
\[
M(\mathcal X)
=
\int \min\{q_0,q_1\}\,\rd\mu
=
1-\TV(Q_0,Q_1).
\]
For every $x\in\mathcal X$, by the triangle inequality,
\[
|Y(x)-\vartheta_0(x)|+|Y(x)-\vartheta_1(x)|
\ \ge\
|\vartheta_0(x)-\vartheta_1(x)|
=
\Delta.
\]
Integrating with respect to $M$ yields
\[
\int |Y-\vartheta_0|\,\rd M
+
\int |Y-\vartheta_1|\,\rd M
\ \ge\
\Delta\,M(\mathcal X)
=
\Delta\,(1-\TV(Q_0,Q_1)).
\]
Since $M\le Q_0$ and $M\le Q_1$ as measures, we have
\[
\int |Y-\vartheta_0|\,\rd M\le \E_{Q_0}|Y-\vartheta_0|,
\qquad
\int |Y-\vartheta_1|\,\rd M\le \E_{Q_1}|Y-\vartheta_1|.
\]
Hence
\[
\E_{Q_0}|Y-\vartheta_0|+\E_{Q_1}|Y-\vartheta_1|
\ \ge\
\Delta\,(1-\TV(Q_0,Q_1)).
\]
Finally, using $\max\{u,v\}\ge (u+v)/2$, we get
\[
\max\big\{\E_{Q_0}|Y-\vartheta_0|,\ \E_{Q_1}|Y-\vartheta_1|\big\}
\ \ge\
\frac{\Delta}{2}\,(1-\TV(Q_0,Q_1)),
\]
and therefore also
\[
\max\big\{\|Y-\vartheta_0\|_{2,Q_0},\ \|Y-\vartheta_1\|_{2,Q_1}\big\}
\ \ge\
\frac{\Delta}{2}\,(1-\TV(Q_0,Q_1)).
\qedhere
\]
\end{proof}

The next lower bound is not a discretization argument. We will give the estimator strictly
more information than is available in the standard-information model, namely the entire
corrupted path \(\widetilde W\). Even under this stronger information, the terminal value
cannot be recovered with accuracy better than order \(\delta_3\), because the observation laws
corresponding to two admissible Wiener-path perturbations are statistically close.

\begin{proposition}
\label{prop:lower-delta3}
There exists a constant \(c_3=c_3(K,T)>0\) such that, for every \(N\in\mathbb N\) and all \(\delta_1,\delta_2,\delta_3\in[0,1]\),
\begin{equation}
\label{eq:lower-delta3}
e_N^{(2)}\bigl(\mathcal F(\gamma_1,\gamma_2,K),\delta_1,\delta_2,\delta_3\bigr)
\ge
c_3\delta_3.
\end{equation}
One may take
\[
c_3=\frac14\min\{K,1\}\min\{T,1\}.
\]
\end{proposition}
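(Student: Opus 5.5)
The plan is a two-point (Le Cam) argument built on Lemma~\ref{lem:LeCam-constant-gap}. I fix one simple input and compare two admissible Wiener-path perturbations. Under both, the observed corrupted paths have nearly the same law, but the corresponding true terminal values differ by a deterministic amount of order \(\delta_3\). The coefficient-noise channels play no role: I take \(\tilde a=a\), \(\tilde b=b\), \(\widetilde{b_y}=b_y\), which is admissible for every \(\delta_1,\delta_2\).

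\textbf{Step 1 (choice of input and perturbations).} Let \(\kappa:=\min\{K,1\}\) and take \(a\equiv0\), \(b\equiv\kappa\), \(\eta\equiv0\). This triple lies in \(\mathcal F(\gamma_1,\gamma_2,K)\), since \(b_y\equiv0\), \(L_1b\equiv0\) and \(|b(0,0)|\le K\). Then \(X^{a,b,\eta}(T)=\kappa W(T)\). Set \(\theta:=\min\{1,T^{-1/2}\}\) and compare two perturbations:
\begin{itemize}
\item[(0)] \(p_W\equiv0\), so that \(\tilde W=W\);
\item[(1)] \(p_W(t,x)=\theta t\), so that \(\tilde W(t)=W(t)+\delta_3\theta t\).
\end{itemize}
Both lie in \(\widetilde{\mathcal K}\), because \(|p_t|\le1\), \(p_x=p_{xx}=0\) and \(p(0,0)=0\).

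Now express the target as a function of the observed path \(\omega'=\tilde W\). Under (0) it is \(\vartheta_0(\omega')=\kappa\,\omega'(T)\). Under (1) it is \(\vartheta_1(\omega')=\kappa\,\omega'(T)-\kappa\delta_3\theta T\). Hence \(\vartheta_0-\vartheta_1\equiv\Delta:=\kappa\delta_3\theta T\). Moreover \(\theta T=\min\{T,\sqrt T\}\ge\min\{T,1\}\).

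\textbf{Step 2 (reduction to an estimation problem).} Any \(\mathcal A\in\Phi_N\) applied to this input is a measurable function of \(\boldsymbol\xi\) and finitely many values \(\tilde W(s_k)\), because the coefficient values are deterministic functions of the adaptively chosen points. The output is therefore a measurable function \(Y(\boldsymbol\xi,\tilde W)\) of the pair \((\boldsymbol\xi,\tilde W)\in[0,T]^{i_1}\times C[0,T]\); giving the estimator the whole path only strengthens it.

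Let \(Q_j\) be the law of \((\boldsymbol\xi,\tilde W)\) under perturbation \((j)\). By the independence assumption \eqref{eq:xi-independent}, \(Q_j=\mathcal L(\boldsymbol\xi)\otimes P_j\), where \(P_0\) is Wiener measure and \(P_1\) is the law of Brownian motion with drift \(\delta_3\theta\). Since the first factor is common, \(\TV(Q_0,Q_1)=\TV(P_0,P_1)\). Lemma~\ref{lem:LeCam-constant-gap} then gives
\[
e_N^{(2)}\ge\max_j\|Y-\vartheta_j\|_{2,Q_j}\ge\frac{\Delta}{2}\bigl(1-\TV(P_0,P_1)\bigr).
\]

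\textbf{Step 3 (total variation bound; the main point).} By the Cameron--Martin--Girsanov theorem, the relative entropy of \(P_1\) with respect to \(P_0\) equals \(\frac12\int_0^T(\delta_3\theta)^2\,\rd t=\frac12\delta_3^2\theta^2T\). Pinsker's inequality then yields
\[
\TV(P_0,P_1)\le\sqrt{\tfrac12\cdot\tfrac12\delta_3^2\theta^2T}=\tfrac12\delta_3\theta\sqrt T\le\tfrac12,
\]
using \(\theta\sqrt T\le1\) and \(\delta_3\le1\). Consequently
\[
e_N^{(2)}\ge\frac{\Delta}{4}\ge\frac14\min\{K,1\}\min\{T,1\}\,\delta_3,
\]
which is the claimed constant \(c_3\).

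The only delicate points are the measurability of the adaptive algorithm as a function of the observed path, and the product-structure argument that makes the auxiliary randomization \(\boldsymbol\xi\) harmless. The entropy computation itself is routine once \(\theta\) is chosen so that the TV bound stays at most \(\frac12\) uniformly in \(T\).
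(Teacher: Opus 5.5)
Your proposal is correct and follows the paper's proof essentially step for step. Both use the test equation $\rd X=\min\{K,1\}\,\rd W$ with exact coefficient information, reduce to estimators of the full corrupted path together with the independent auxiliary randomness (product structure, so the total variation reduces to that of the path laws), apply the constant-gap two-point bound of Lemma~\ref{lem:LeCam-constant-gap}, and bound the total variation by a Cameron--Martin relative-entropy computation followed by Pinsker's inequality. The only difference is the scaling of the deterministic shift: you use $\theta t$ with $\theta=\min\{1,T^{-1/2}\}$, whereas the paper uses $t/\max\{T,1\}$. This is inessential and gives the same constant $c_3=\frac14\min\{K,1\}\min\{T,1\}$.
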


\begin{proof}
Let \(\beta:=\min\{K,1\}\) and restrict the input class to
\[
a\equiv0,
\qquad
b\equiv\beta,
\qquad
\eta=0.
\]
This input belongs to \(\mathcal F(\gamma_1,\gamma_2,K)\), and the corresponding solution satisfies
\[
X(T)=\beta W(T).
\]
We take exact coefficient information,
\[
\tilde a=a,
\qquad
\tilde b=b,
\qquad
\widetilde{b_y}=b_y,
\]
which is admissible in \(V_a(\delta_1)\), \(V_b(\delta_2)\), and \(V_{b,b_y}(\delta_2)\). Thus only the perturbed Wiener information remains relevant.

Fix an arbitrary \(\mathcal A\in\Phi_N\). Let \(\zeta\) denote all auxiliary randomness used by \(\mathcal A\). By the construction of randomized standard information, \(\zeta\) is independent of \(\Sigma_\infty\). Enlarging the information can only decrease the minimal error, so we may allow the estimator to observe the full corrupted path \(\tilde W\) and \(\zeta\). Hence it is enough to consider arbitrary measurable estimators
\[
Y:C([0,T])\times\mathcal R\to\mathbb R,
\]
where \((\mathcal R,\mathcal B_{\mathcal R},Q_\zeta)\) is the distribution space of \(\zeta\). Consequently,
\begin{equation}
\label{eq:delta3-full-path-reduction}
e_N^{(2)}\bigl(\mathcal F(\gamma_1,\gamma_2,K),\delta_1,\delta_2,\delta_3\bigr)
\ge
\inf_Y\sup_{\tilde W\in\mathcal W(W,\delta_3)}
\|\beta W(T)-Y(\tilde W,\zeta)\|_2.
\end{equation}

Consider
\[
g(t):=\frac{t}{\max\{T,1\}},
\qquad
u(t):=\delta_3g(t),
\qquad t\in[0,T].
\]
The functions \(p_0(t,x)\equiv0\) and \(p_1(t,x):=g(t)\) belong to \(\widetilde{\mathcal K}\), since \(p_i(0,0)=0\), \(p_{i,x}=p_{i,xx}\equiv0\), and \(|p_{i,t}|\le1\). Therefore the two observations
\[
\tilde W_0(t)=W(t),
\qquad
\tilde W_1(t)=W(t)+u(t),
\qquad t\in[0,T],
\]
are admissible elements of \(\mathcal W(W,\delta_3)\). Let \(P_i\) be the law of \(\tilde W_i\) on \(C([0,T])\), and put
\[
\bar P_i:=P_i\otimes Q_\zeta,
\qquad i=0,1.
\]
The product form follows from the independence of \(\zeta\) and \(\Sigma_\infty\). On \(\mathcal X:=C([0,T])\times\mathcal R\), define
\[
\vartheta_0(\omega,z):=\beta\omega(T),
\qquad
\vartheta_1(\omega,z):=\beta\bigl(\omega(T)-u(T)\bigr).
\]
Under \(\bar P_i\), the quantity \(\vartheta_i\) is exactly the target \(\beta W(T)\) expressed as a function of the observed corrupted path. Moreover,
\begin{equation}
\label{eq:delta3-gap}
\vartheta_0-\vartheta_1\equiv\Delta,
\qquad
\Delta:=\beta u(T)=\beta\delta_3\min\{T,1\}.
\end{equation}
From \eqref{eq:delta3-full-path-reduction}, restricting the supremum to \(\tilde W_0\) and \(\tilde W_1\), and applying Lemma~\ref{lem:LeCam-constant-gap}, we obtain
\begin{equation}
\label{eq:delta3-lecam-applied}
e_N^{(2)}\bigl(\mathcal F(\gamma_1,\gamma_2,K),\delta_1,\delta_2,\delta_3\bigr)
\ge
\frac{\Delta}{2}\bigl(1-\TV(\bar P_0,\bar P_1)\bigr).
\end{equation}
Since \(\bar P_i=P_i\otimes Q_\zeta\),
\[
\TV(\bar P_0,\bar P_1)=\TV(P_0,P_1).
\]
The shift \(u\) belongs to the Cameron--Martin space \(H^1_0([0,T])\). If \(B_t(\omega)=\omega(t)\) denotes the canonical process on \(C([0,T])\), then, by the Cameron--Martin theorem,
\begin{equation}
\label{eq:delta3-RN}
\frac{\rd P_1}{\rd P_0}
=
\exp\left(
\int_0^T \dot u(t)\,\rd B_t
-
\frac12\int_0^T |\dot u(t)|^2\,\rd t
\right)
\qquad P_0\text{-a.s.}
\end{equation}
see, for instance, \cite[Ch.~5]{CohEl} or \cite[Ch.~VIII]{Protter}. Hence
\[
D_{\rm KL}(P_1\|P_0)
=
\frac12\int_0^T |\dot u(t)|^2\,\rd t
=
\frac{\delta_3^2}{2}\int_0^T |g'(t)|^2\,\rd t
\le
\frac{\delta_3^2}{2},
\]
because \(g'(t)=1/\max\{T,1\}\). Pinsker's inequality gives
\[
\TV(\bar P_0,\bar P_1)=\TV(P_0,P_1)
\le
\sqrt{\frac12D_{\rm KL}(P_1\|P_0)}
\le
\frac{\delta_3}{2}.
\]
Since \(\delta_3\in[0,1]\), \(1-\TV(\bar P_0,\bar P_1)\ge1/2\). Combining this with \eqref{eq:delta3-lecam-applied} and \eqref{eq:delta3-gap} yields
\[
e_N^{(2)}\bigl(\mathcal F(\gamma_1,\gamma_2,K),\delta_1,\delta_2,\delta_3\bigr)
\ge
\frac14\min\{K,1\}\min\{T,1\}\delta_3.
\]
\end{proof}

\begin{remark}
\label{rem:method-independent-delta3}
The proof of Proposition~\ref{prop:lower-delta3} uses neither the Milstein correction nor any
property of the time discretization. It only uses the observation model and the scalar test
equation
\[
\rd X(t)=\beta\,\rd W(t).
\]
Moreover, the proof remains valid after granting the estimator the entire corrupted path.
Consequently, the same lower bound applies to any method based on the corresponding noisy
Wiener information.

In particular, consider the scalar case of the randomized Euler standard-information model
from \cite{BaranekEtAl2026} with its smooth perturbation class \(\mathcal W_0\). Combining
Proposition~\ref{prop:lower-delta3} with the upper bound and the coefficient-noise lower bounds
in \cite[Theorem~2]{BaranekEtAl2026} yields
\[
e_N^{(r)}
\bigl(
\mathcal F(\varrho,K),
\mathcal W_0,
\delta_1,\delta_2,\delta_3
\bigr)
\asymp
N^{-\min\{\varrho,1/2\}}
+\delta_1+\delta_2+\delta_3,
\qquad r\ge2.
\]
Thus, in the scalar smooth-noise setting, the two-point construction fills the
\(\delta_3\)-gap left open in the general bounds of \cite{BaranekEtAl2026}.
\end{remark}

\begin{theorem}
\label{thm:global-lower}
Let \(r\ge2\). There exists a constant \(c>0\), depending only on \(r,K,T,\gamma_1,\gamma_2\), such that, for every \(N\in\mathbb N\) and all \(\delta_1,\delta_2,\delta_3\in[0,1]\),
\begin{equation}
\label{eq:global-lower}
e_N^{(r)}\bigl(\mathcal F(\gamma_1,\gamma_2,K),\delta_1,\delta_2,\delta_3\bigr)
\ge
c\bigl(N^{-\alpha}+\delta_1+\delta_2+\delta_3\bigr).
\end{equation}
\end{theorem}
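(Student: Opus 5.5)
The plan is to combine Proposition~\ref{prop:lower-PMPP19} and Proposition~\ref{prop:lower-delta3}, both stated for the $L^2$-error, with the monotonicity of $L^r$-norms in $r$. The resulting bound holds for $N\ge N_{12}$, and the remaining finitely many $N$ are handled separately.

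\emph{Step 1: reduce from $L^r$ to $L^2$.} Since $r\ge2$, Lyapunov's inequality gives $\|Y\|_r\ge\|Y\|_2$ for every random variable $Y$. This inequality holds pointwise for each algorithm, each input and each noisy information. Taking the suprema in \eqref{eq:error-single-input} and \eqref{eq:error-class} and then the infimum in \eqref{eq:minimal-error}, we get
\[
e_N^{(r)}\bigl(\mathcal F(\gamma_1,\gamma_2,K),\delta_1,\delta_2,\delta_3\bigr)\ge e_N^{(2)}\bigl(\mathcal F(\gamma_1,\gamma_2,K),\delta_1,\delta_2,\delta_3\bigr).
\]

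\emph{Step 2: combine the two lower bounds.} For $N\ge N_{12}$, Propositions~\ref{prop:lower-PMPP19} and~\ref{prop:lower-delta3} together give
\[
e_N^{(2)}\ge\max\{c_{12},c_3\}^{-1}\cdot\ldots
\]
More precisely, $e_N^{(2)}\ge \min\{c_{12},c_3\}\max\{N^{-\alpha},\delta_1,\delta_2,\delta_3\}$. Since a maximum of four nonnegative numbers is at least a quarter of their sum, this yields \eqref{eq:global-lower} with $c=\tfrac14\min\{c_{12},c_3\}$ for all $N\ge N_{12}$.

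\emph{Step 3: the finitely many $N<N_{12}$.} This is the only mildly delicate point. Proposition~\ref{prop:lower-delta3} already covers $\delta_3$ for every $N$. For the terms $N^{-\alpha}$, $\delta_1$ and $\delta_2$ we use that $e_N^{(2)}$ is nonincreasing in $N$. This holds because $\Phi_N\subseteq\Phi_{N'}$ for $N\le N'$, since the constraint is only $3i_1+i_2\le N$. Hence for $N<N_{12}$,
\[
e_N^{(2)}\ge e_{N_{12}}^{(2)}\ge c_{12}\max\{N_{12}^{-\alpha},\delta_1,\delta_2\}.
\]
Moreover $N^{-\alpha}\le 1\le N_{12}^{\alpha}N_{12}^{-\alpha}$, so $N_{12}^{-\alpha}\ge N_{12}^{-\alpha}N^{-\alpha}$. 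The previous bound therefore gives $e_N^{(2)}\ge c_{12}N_{12}^{-\alpha}\max\{N^{-\alpha},\delta_1,\delta_2\}$.

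Combining this with the $\delta_3$ bound and arguing as in Step 2, \eqref{eq:global-lower} holds for all $N$ with
\[
c=\tfrac14\min\{c_{12}N_{12}^{-\alpha},c_3\}.
\]
This constant depends only on $K,T,\gamma_1,\gamma_2$, and hence admissibly on $r$ as well.
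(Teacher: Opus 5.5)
Your proof is correct and follows the paper's argument essentially verbatim: you pass from $L^r$ to $L^2$ by monotonicity of $L^p$-norms, combine Propositions~\ref{prop:lower-PMPP19} and~\ref{prop:lower-delta3} for $N\ge N_{12}$, treat $N<N_{12}$ via $\Phi_N\subseteq\Phi_{N_{12}}$ at the cost of the factor $N_{12}^{-\alpha}$, and finish with $\max\ge\frac14\sum$. The only thing to fix is the stray garbled display $e_N^{(2)}\ge\max\{c_{12},c_3\}^{-1}\cdot\ldots$ in Step~2, which you immediately supersede and should simply delete.
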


\begin{proof}
For \(r\ge2\), monotonicity of \(L^p\)-norms gives \(e_N^{(r)}\ge e_N^{(2)}\). If \(N\ge N_{12}\), Proposition~\ref{prop:lower-PMPP19} and Proposition~\ref{prop:lower-delta3} imply
\[
e_N^{(2)}
\ge
c_0\max\bigl\{N^{-\alpha},\delta_1,\delta_2,\delta_3\bigr\}
\]
with \(c_0:=\min\{c_{12},c_3\}\). For \(N<N_{12}\), monotonicity of the minimal error with respect to the information budget gives \(e_N^{(2)}\ge e_{N_{12}}^{(2)}\), while Proposition~\ref{prop:lower-delta3} still gives the \(\delta_3\)-term. Hence, after decreasing \(c_0\) by the factor \(N_{12}^{-\alpha}\), the same estimate holds for all \(N\in\mathbb N\). Finally,
\[
\max\{x_1,x_2,x_3,x_4\}
\ge
\frac14(x_1+x_2+x_3+x_4),
\qquad x_j\ge0,
\]
which proves \eqref{eq:global-lower}.
\end{proof}

\begin{corollary}
\label{cor:minimax-optimality}
Let \(r\ge2\). There exist constants \(0<c\le C<\infty\), depending only on \(r,K,T,\gamma_1,\gamma_2\), such that, for every \(N\in\mathbb N\) and all \(\delta_1,\delta_2,\delta_3\in[0,1]\),
\begin{multline}
\label{eq:minimax-optimality}
c\bigl(N^{-\alpha}+\delta_1+\delta_2+\delta_3\bigr)
\le
e_N^{(r)}\bigl(\mathcal F(\gamma_1,\gamma_2,K),\delta_1,\delta_2,\delta_3\bigr)
\le{}\\
C\bigl(N^{-\alpha}+\delta_1+\delta_2+\delta_3\bigr).
\end{multline}
Consequently, the noisy randomized Milstein scheme is minimax order-optimal in the
randomized standard-information model defined in Section~\ref{subsec:adaptive-information}.
\end{corollary}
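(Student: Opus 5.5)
The lower inequality in \eqref{eq:minimax-optimality} is exactly Theorem~\ref{thm:global-lower}, so the whole argument lies in the upper inequality. The tool is the concrete algorithm $\mathcal A_n^{RM,\mathrm{noisy}}$ and its bound \eqref{eq:final-upper-minimal}. The only real issue is to convert the budget $4n+1$ into an arbitrary $N\in\mathbb N$.

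For $N\ge5$, choose $n:=\lfloor (N-1)/4\rfloor\ge1$. Then $4n+1\le N$, so $\Phi_{4n+1}\subseteq\Phi_N$. Since the minimal error is monotone in the budget (any algorithm of cost at most $4n+1$ also has cost at most $N$), \eqref{eq:final-upper-minimal} gives
\[
e_N^{(r)}\le e_{4n+1}^{(r)}\le C_r\bigl(n^{-\alpha}+\delta_1+\delta_2+\delta_3\bigr).
\]
For $N\ge5$ we have $n\ge (N-1)/4-1+1\ge N/8$, up to checking small cases. Hence $n^{-\alpha}\le 8^{\alpha}N^{-\alpha}\le 8N^{-\alpha}$, because $\alpha\le1$.

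For $1\le N\le4$, take the trivial algorithm $\mathcal A\equiv\eta$, which uses no information and lies in $\Phi_N$. Its error is $\|X^{a,b,\eta}(T)-\eta\|_r$. This is bounded uniformly over $\mathcal F(\gamma_1,\gamma_2,K)$ by a constant $C$, using the standard moment bound for SDE solutions with linearly growing coefficients and $|\eta|\le K$. Alternatively, take $n=1$ and invoke \eqref{eq:final-upper-alg} directly, noting that the error bound holds regardless of the cost. Since $N^{-\alpha}\ge4^{-1}$ in this range, the constant can be written as $C'N^{-\alpha}$.

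Combining both cases and taking the maximum of the constants gives the upper inequality, with $C$ depending only on $r,K,T,\gamma_1,\gamma_2$. The optimality statement then follows: for $N=4n+1$, the scheme $\mathcal A_n^{RM,\mathrm{noisy}}$ attains an error of order $N^{-\alpha}+\delta_1+\delta_2+\delta_3$, which matches the lower bound up to constants.

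The main thing to watch is this small-$N$ bookkeeping and the monotonicity $\Phi_{M}\subseteq\Phi_N$ for $M\le N$. Both are routine and follow from the definition of $\Phi_N$ via the cost constraint $3i_1+i_2\le N$.
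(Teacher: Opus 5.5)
Your argument is essentially the paper's proof. The lower bound is Theorem~\ref{thm:global-lower}. The upper bound uses $n=\lfloor (N-1)/4\rfloor$, monotonicity of $e_N^{(r)}$ in $N$, and \eqref{eq:final-upper-minimal}, with a cost-free trivial algorithm for the finitely many small budgets (the paper uses $0$, you use $\eta$). The paper splits at $N\ge 9$ with $n\ge N/9$. Your split at $N\ge5$ with $n\ge N/8$ also works: $n\ge (N-4)/4\ge N/8$ for $N\ge 8$, and $n=1$ for $5\le N\le 7$. However, your displayed intermediate bound $n\ge (N-1)/4$ is false as written.

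Drop the ``alternatively, take $n=1$'' remark. The scheme $\mathcal A_1^{RM,\mathrm{noisy}}$ has cost $5>N$ for $N\le 4$; note that $\tilde W(0)=\delta_3p_W(0,0)$ must also be queried. So it is not in $\Phi_N$, and its error does not bound the infimum defining $e_N^{(r)}$.
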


\begin{proof}
The lower bound is Theorem~\ref{thm:global-lower}. For the upper bound, let \(N\ge9\) and set
\[
n_N:=\left\lfloor\frac{N-1}{4}\right\rfloor.
\]
Then \(4n_N+1\le N\) and \(n_N\ge N/9\). By monotonicity of the minimal error and by \eqref{eq:final-upper-minimal},
\[
e_N^{(r)}
\le
e_{4n_N+1}^{(r)}
\le
C_r\bigl(n_N^{-\alpha}+\delta_1+\delta_2+\delta_3\bigr)
\le
C\bigl(N^{-\alpha}+\delta_1+\delta_2+\delta_3\bigr).
\]
For the finitely many cases \(1\le N<9\), we use the zero algorithm. Standard moment estimates for solutions of SDEs with coefficients from \(\mathcal F(\gamma_1,\gamma_2,K)\) give
\[
\sup_{(a,b,\eta)\in\mathcal F(\gamma_1,\gamma_2,K)}
\|X^{a,b,\eta}(T)\|_r<\infty,
\]
and since \(N^{-\alpha}\ge 8^{-\alpha}\) for \(N<9\), these finitely many budgets are absorbed into the constant \(C\).
\end{proof}
\section{Numerical experiments}\label{sec:numerical}

The goal of this section is to illustrate the four-term error structure predicted by the theoretical results,
\[
 n^{-\alpha},\qquad \delta_1,\qquad \delta_2,\qquad \delta_3,
 \qquad
 \alpha=\min\{\gamma_1+\tfrac12,\gamma_2\}.
\]

Since the information cost of the noisy randomized Milstein scheme is \(4n+1\), rates in the
number of time steps \(n\) and in the information budget \(N\) are equivalent up to constant
factors.

All reported errors are root mean square errors. All Monte Carlo estimates were computed
from \(10000\) independent sample paths; the shaded bands in the figures represent two
estimated standard errors of the Monte Carlo mean. In the noisy experiments, active
perturbation channels are multiplied by signs from a finite set and the largest observed
RMSE over the tested sign configurations is reported. This finite maximization is not part
of the theoretical definition; it is used only as a numerical proxy for the worst-case character
of the error criterion.

\subsection{Benchmark equation and admissible perturbations}

As the main test problem we use the exactly solvable linear SDE
\begin{equation}
\label{eq:numerical-linear-benchmark}
\begin{cases}
 \rd X(t)=\mu(t)X(t)\,\rd t+\sigma(t)X(t)\,\rd W(t),\qquad t\in[0,T],\\[0.2em]
 X(0)=x_0,
\end{cases}
\end{equation}
where
\[
T=1,
\qquad
x_0=1,
\qquad
\mu(t)=\mu_0+\mu_1t^{\gamma_1},
\qquad
\sigma(t)=\sigma_0+\sigma_1t^{\gamma_2},
\]
and
\[
\mu_0=0.7,
\qquad
\mu_1=0.35,
\qquad
\sigma_0=0.4,
\qquad
\sigma_1=0.3,
\qquad
\gamma_1=0.2,
\qquad
\gamma_2=0.7.
\]
Thus
\[
\alpha=\min\{\gamma_1+\tfrac12,\gamma_2\}=0.7.
\]
The coefficients are
\[
a(t,x)=\mu(t)x,
\qquad
b(t,x)=\sigma(t)x,
\qquad
b_y(t,x)=\sigma(t),
\qquad
L_1b(t,x)=\sigma(t)^2x,
\]
and hence satisfy the assumptions of the input class for a sufficiently large value of the parameter \(K\).  The advantage of this benchmark is that the exact terminal solution is available:
\begin{equation}
\label{eq:numerical-linear-exact}
X(T)
=
x_0\exp\left(
\int_0^T\left(\mu(t)-\frac12\sigma(t)^2\right)\,\rd t
+
\int_0^T\sigma(t)\,\rd W(t)
\right).
\end{equation}

For the exact terminal value, the stochastic integral
\[
\int_0^T \sigma(t)\,\rd W(t)
\]
was sampled jointly with the Brownian increments used by the numerical scheme. On each
interval \([t_i,t_{i+1}]\), the pair
\[
\left(\Delta W_i,\int_{t_i}^{t_{i+1}}\sigma(t)\,\rd W(t)\right)
\]
is Gaussian with covariance matrix determined by
\[
\operatorname{Var}(\Delta W_i)=h,
\qquad
\operatorname{Cov}\left(\Delta W_i,\int_{t_i}^{t_{i+1}}\sigma(t)\,\rd W(t)\right)
=
\int_{t_i}^{t_{i+1}}\sigma(t)\,\rd t,
\]
and
\[
\operatorname{Var}\left(\int_{t_i}^{t_{i+1}}\sigma(t)\,\rd W(t)\right)
=
\int_{t_i}^{t_{i+1}}\sigma(t)^2\,\rd t.
\]

 Consequently, the strong error is measured against the exact terminal value, not against a refined-grid approximation.

For coefficient noise we use the perturbation functions
\[
p_a(t,x)\equiv 1,
\qquad
p_b(t,x)\equiv 1,
\qquad
q_b(t,x)\equiv 1.
\]
Clearly, \(p_a,p_b\in\mathcal K_{\rm lin}\) and \(q_b\in\mathcal K_{\rm bd}\).  For the Wiener-path perturbation in the stability and total-error experiments we use
\begin{equation}
\label{eq:numerical-mixed-sin-perturbation}
p_W(t,x)=\frac12\sin(t+x),
\end{equation}
which belongs to \(\widetilde{\mathcal K}\), since \(|p_W(0,0)|=0\) and its first time derivative and first two spatial derivatives are bounded by \(1/2\) in absolute value.

\subsection{Linear response to noisy information}

We first isolate the perturbation part of the error.  For fixed \(n=1024\), the exact-information and noisy-information Milstein schemes are run on the same Brownian paths and the same randomized quadrature points.  We then estimate
\[
\left\|X_n^{RM}-\widetilde X_n^{RM}\right\|_2
\]
as a function of the active noise level \(\delta\).  Figure~\ref{fig:perturbation-response} shows the response for the drift oracle, the diffusion oracle, the Wiener path, and all channels acting simultaneously.

\begin{figure}[htbp]
\centering
\includegraphics[width=0.98\linewidth]{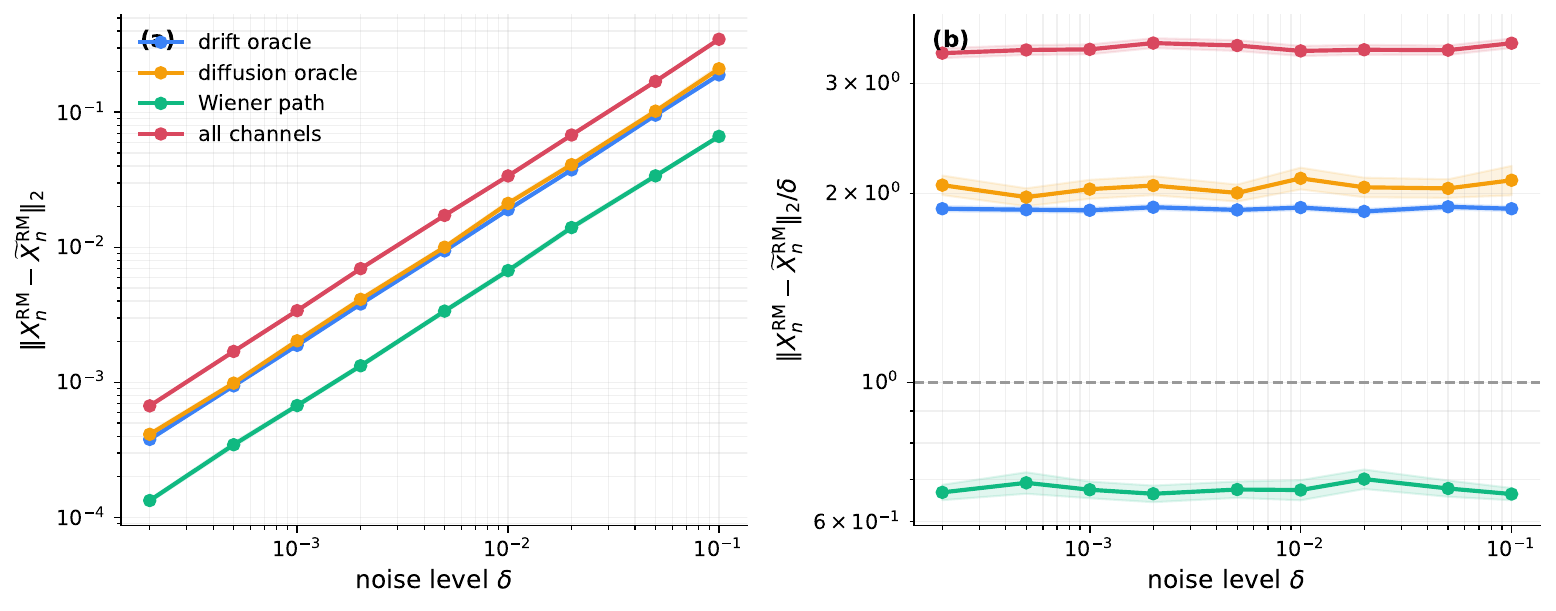}
\caption{Perturbation response of the noisy randomized Milstein scheme at a fixed discretization level. Left: RMSE between the exact-information and noisy-information schemes versus the common noise level \(\delta\). Right: the normalized quantity \(\|X_n^{RM}-\widetilde X_n^{RM}\|_2/\delta\), which remains approximately constant and illustrates the linear dependence on the perturbation size.}
\label{fig:perturbation-response}
\end{figure}
\FloatBarrier

The log-log slopes in \(\delta\) are essentially equal to one; see Table~\ref{tab:perturbation-slopes}.  This is the numerical counterpart of the stability estimate
\[
\left\|X_n^{RM}-\widetilde X_n^{RM}\right\|_2
\lesssim
\delta_1+\delta_2+\delta_3.
\]
The derivative-oracle perturbation was also tested separately.  Its empirical slope is again close to one, but its constant is much smaller, which is natural because the perturbation of \(b_y\) enters only through the Milstein correction.

\begin{table}[htbp]
\centering
\begin{tabular}{lc}
\hline
active perturbation channel & fitted slope in \(\delta\) \\
\hline
drift oracle & 1.000 \\
diffusion oracle & 1.004 \\
Wiener path & 1.000 \\
all channels & 1.002 \\
\hline
\end{tabular}
\caption{Empirical slopes for the perturbation-response experiment in Figure~\ref{fig:perturbation-response}.}
\label{tab:perturbation-slopes}
\end{table}

\subsection{Fixed noise levels and error floors}

We next study the total strong error
\[
\left\|X(T)-\widetilde X_n^{RM}\right\|_2
\]
for fixed nonzero noise levels and
\[
n\in\{32,64,128,256,512,1024\}.
\]
Figure~\ref{fig:noise-floor} shows the exact-information error and the noisy errors for fixed levels \(\delta=0.05\) and \(\delta=0.1\).  The noisy curves display the expected transition from a discretization-dominated regime to a noise-dominated regime: once the discretization error becomes smaller than the active perturbation contribution, the total error reaches a floor.

\begin{figure}[htbp]
\centering
\includegraphics[width=0.86\linewidth]{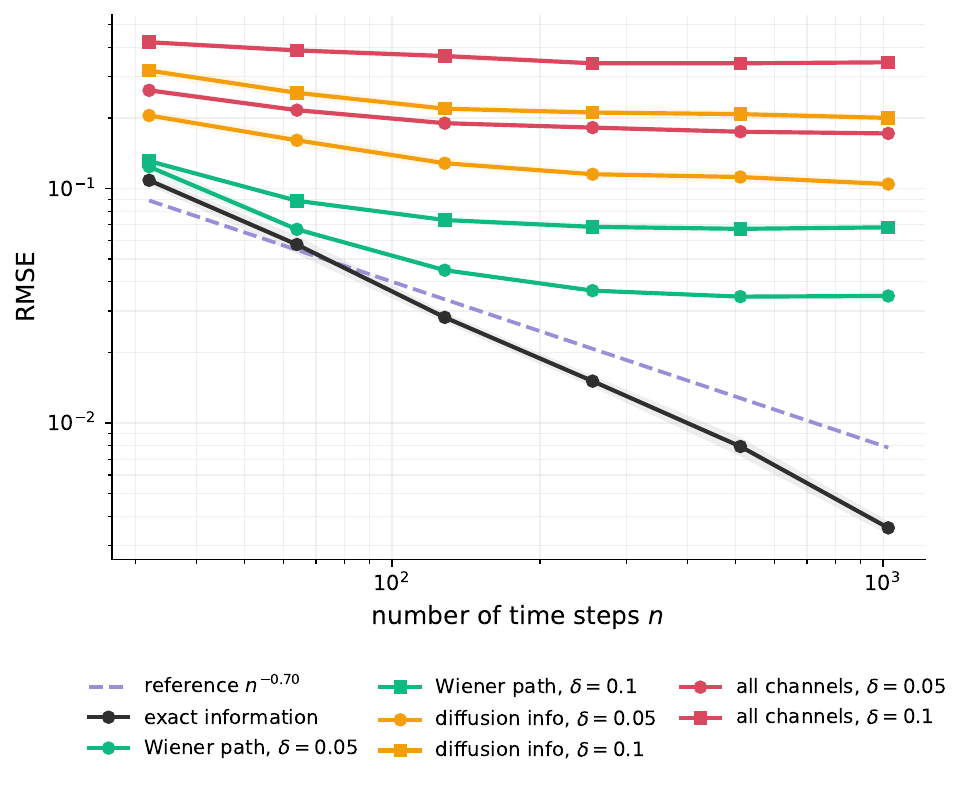}
\caption{Total RMSE for the exactly solvable linear benchmark. The exact-information scheme decays rapidly, whereas fixed perturbation levels generate the expected noise floors. The cases ``diffusion info'' and ``all channels'' use the same noise level in the noisy oracles for \(b\) and \(b_y\), while ``Wiener path'' refers to perturbations of the observed Brownian path. The dashed line has slope \(-\alpha\), where \(\alpha=0.7\).}
\label{fig:noise-floor}
\end{figure}
\FloatBarrier

The exact-information curve decreases faster than \(n^{-\alpha}\) on this particular benchmark, with fitted rate approximately \(0.97\).  This is not in contradiction with the theory: the rate \(n^{-\alpha}\) is a worst-case, minimax rate over the whole input class, whereas \eqref{eq:numerical-linear-benchmark} is one concrete, exactly solvable test equation.  The relevant effect in Figure~\ref{fig:noise-floor} is the formation of error floors under fixed noise.

\subsection{Balanced noise regime}

The optimality result suggests a natural balanced scaling.  We set
\begin{equation}
\label{eq:balanced-delta-def}
\delta_1(n)=\delta_2(n)=\delta_3(n)=c n^{-\alpha}.
\end{equation}
Then
\[
n^{-\alpha}+\delta_1(n)+\delta_2(n)+\delta_3(n)
=
(1+3c)n^{-\alpha},
\]
so the total error should follow the same order \(n^{-\alpha}\), with a larger constant depending on \(c\).  This behaviour is shown in Figure~\ref{fig:balanced-regime}.

\begin{figure}[htbp]
\centering
\includegraphics[width=0.86\linewidth]{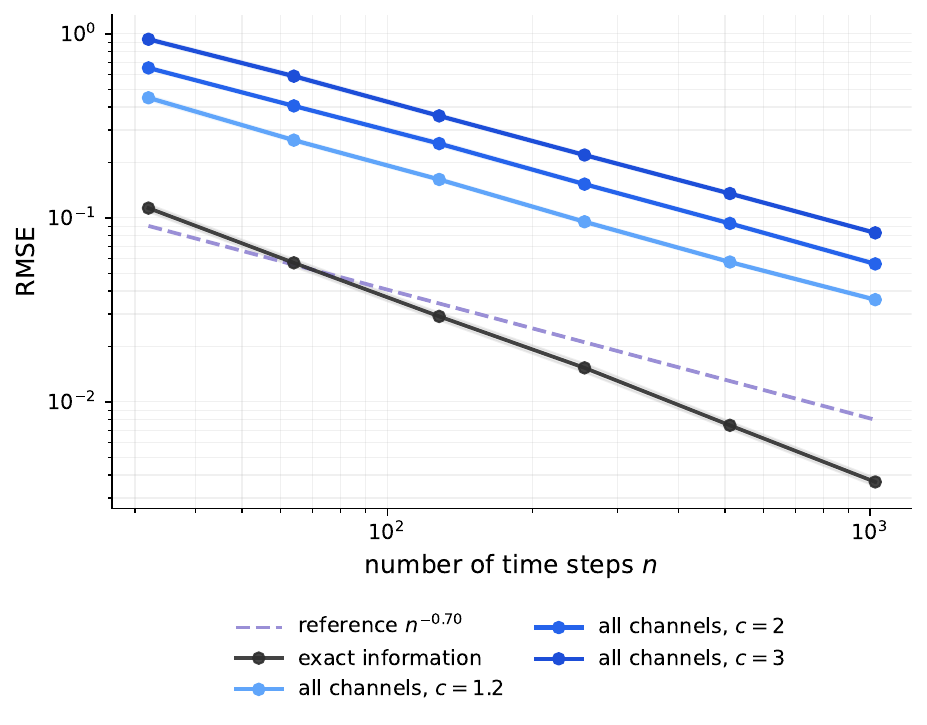}
\caption{Balanced regime for the exactly solvable linear benchmark. Here \(\delta_1(n)=\delta_2(n)=\delta_3(n)=c n^{-\alpha}\) with \(\alpha=0.7\). Although the exact-information scheme converges faster on this particular benchmark, the noisy schemes inherit the predicted order \(n^{-\alpha}\) once the perturbation levels are balanced with the discretization scale.}
\label{fig:balanced-regime}
\end{figure}
\FloatBarrier

The fitted rates are reported in Table~\ref{tab:balanced-rates}.  As \(c\) increases, the perturbation contribution becomes dominant and the fitted rate approaches the predicted value \(\alpha=0.7\).  For \(c=2\) and \(c=3\), the ratios
\[
\frac{\mathrm{RMSE}}{n^{-\alpha}+\delta_1(n)+\delta_2(n)+\delta_3(n)}
\]
are nearly constant over the tested range of \(n\); their observed ranges are approximately \([1.03,1.08]\) and \([1.06,1.08]\), respectively.

\begin{table}[htbp]
\centering
\begin{tabular}{lc}
\hline
scenario & fitted rate \\
\hline
exact information & 0.984 \\
balanced noise, \(c=1.2\) & 0.731 \\
balanced noise, \(c=2\) & 0.708 \\
balanced noise, \(c=3\) & 0.701 \\
\hline
\end{tabular}
\caption{Empirical rates in the balanced regime from Figure~\ref{fig:balanced-regime}.  The predicted minimax rate for the chosen parameters is \(\alpha=0.7\).}
\label{tab:balanced-rates}
\end{table}

\subsection{A nonlinear exactly solvable benchmark}

To check that the numerical behaviour above is not an artifact of the linear diffusion coefficient in \eqref{eq:numerical-linear-benchmark}, we also ran the same type of experiments on a nonlinear exactly solvable benchmark.  We consider
\begin{equation}
\label{eq:numerical-hyperbolic-benchmark}
\begin{cases}
\rd X(t)=\left(\mu(t)\sqrt{1+X(t)^2}+\frac12\sigma(t)^2X(t)\right)\rd t
+\sigma(t)\sqrt{1+X(t)^2}\,\rd W(t),\\[0.2em]
X(0)=x_0,
\end{cases}
\end{equation}
with
\[
T=1,
\qquad
x_0=1,
\qquad
\mu(t)=0.2+0.2t^{\gamma_1},
\qquad
\sigma(t)=0.25+0.25t^{\gamma_2},
\]
and \(\gamma_1=0.2\), \(\gamma_2=0.7\).  Applying It\^o's formula to \(Y(t)=\operatorname{arsinh}(X(t))\), we get
\[
\rd Y(t)=\mu(t)\,\rd t+\sigma(t)\,\rd W(t),
\]
and therefore
\begin{equation}
\label{eq:numerical-hyperbolic-exact}
X(T)=\sinh\left(
\operatorname{arsinh}(x_0)+\int_0^T\mu(t)\,\rd t+\int_0^T\sigma(t)\,\rd W(t)
\right).
\end{equation}
The coefficients of \eqref{eq:numerical-hyperbolic-benchmark} are
\[
a(t,x)=\mu(t)\sqrt{1+x^2}+\frac12\sigma(t)^2x,
\qquad
b(t,x)=\sigma(t)\sqrt{1+x^2},
\]
\[
b_y(t,x)=\sigma(t)\frac{x}{\sqrt{1+x^2}},
\qquad
L_1b(t,x)=\sigma(t)^2x.
\]
Thus the diffusion is genuinely nonlinear in the state variable, and the derivative oracle \(b_y\) is nontrivial and state-dependent.  The coefficients still satisfy the assumptions of the input class for a sufficiently large value of \(K\).

Figure~\ref{fig:hyperbolic-benchmark} reports two representative diagnostics for this nonlinear benchmark.  The perturbation response remains essentially linear in the active noise level; the fitted slopes are approximately \(1.001\), \(1.002\), \(1.001\), and \(1.003\) for the drift oracle, diffusion oracle, Wiener path, and all channels, respectively.  In the balanced regime, the fitted rates for \(c=1.2\), \(c=2\), and \(c=3\) are approximately \(0.705\), \(0.695\), and \(0.708\), again consistent with the predicted order \(n^{-\alpha}\).

\begin{figure}[htbp]
\centering
\includegraphics[width=0.98\linewidth]{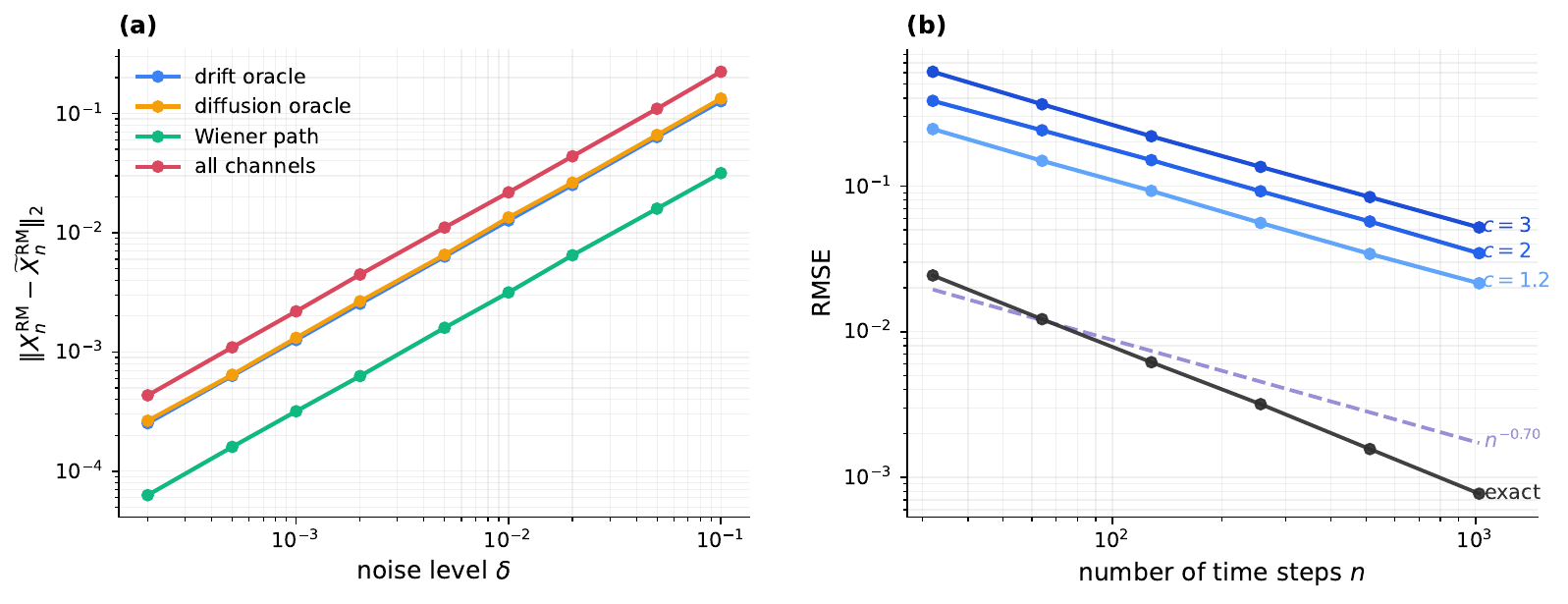}
\caption{Nonlinear exactly solvable benchmark \eqref{eq:numerical-hyperbolic-benchmark}. Left: perturbation response at fixed \(n=1024\). Right: balanced noise regime \(\delta_1(n)=\delta_2(n)=\delta_3(n)=c n^{-\alpha}\). The results show the same qualitative behaviour as for the linear benchmark, now with a genuinely state-dependent derivative oracle \(b_y\).}
\label{fig:hyperbolic-benchmark}
\end{figure}
\FloatBarrier

\subsection{Fingerprint of the Wiener-path lower bound}

Finally, we illustrate the mechanism behind the lower bound proportional to \(\delta_3\).  We use the same simple equation as in the proof of the lower bound,
\[
a\equiv0,
\qquad
b\equiv\beta,
\qquad
\eta=0,
\qquad
\beta=1.
\]
Then
\[
X(T)=\beta W(T).
\]
For this experiment the Wiener-path perturbation is the deterministic Cameron--Martin shift
\[
p_W(t,x)=\frac{t}{\max\{T,1\}},
\qquad
\widetilde W(t)=W(t)+\delta_3p_W(t,W(t)).
\]
For \(T=1\), the induced terminal separation is exactly \(\beta\delta_3\). This experiment is
therefore a sanity check and visualization of the deterministic separation used in the
Cameron--Martin lower-bound construction; see Figure~\ref{fig:delta3-fingerprint}.

\begin{figure}[htbp]
\centering
\includegraphics[width=0.58\linewidth]{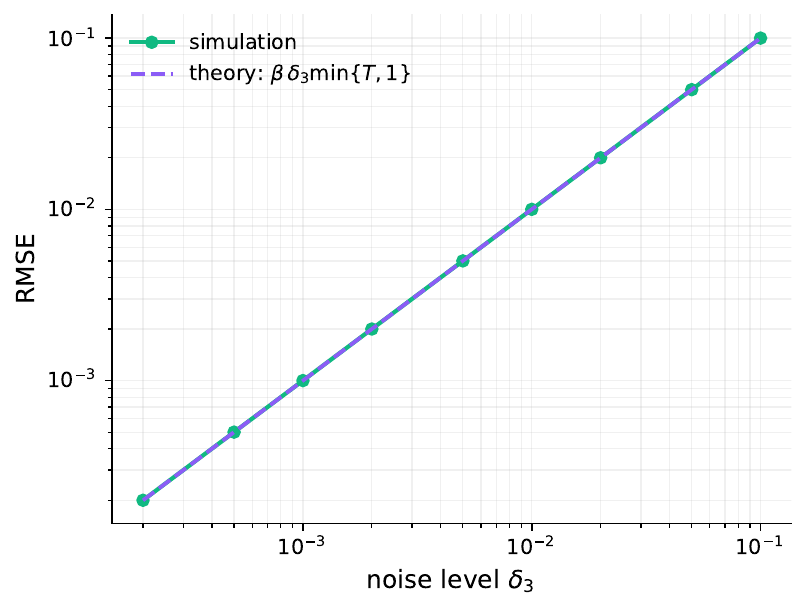}
\caption{Numerical fingerprint of the lower-bound construction for \(\delta_3\). For the test equation \(\rd X(t)=\beta\,\rd W(t)\) and the perturbation \(\widetilde W(t)=W(t)+\delta_3 t/\max\{T,1\}\), the simulated separation reproduces the theoretical value \(\beta\,\delta_3\min\{T,1\}\).}
\label{fig:delta3-fingerprint}
\end{figure}
\FloatBarrier

The fitted slope in this experiment is \(1.000\), and the ratio between the observed RMSE and the value \(\beta\delta_3\min\{T,1\}\) equals one for all tested values of \(\delta_3\), up to numerical precision.  This experiment should be interpreted as a visualization of the statistical indistinguishability construction used in the lower-bound proof.

The experiments are consistent with the four-term structure of the theoretical estimate.
The perturbation-response experiment illustrates the linear dependence on the active
information-noise level. The fixed-noise experiment shows the formation of error floors.
The balanced-regime experiment is consistent with the predicted behaviour under the choice
\(\delta_j(n)\asymp n^{-\alpha}\). Finally, the lower-bound fingerprint visualizes the
Cameron--Martin shift mechanism behind the term linear in \(\delta_3\).

\section{Conclusions}

We have studied the strong approximation of scalar SDEs by a randomized Milstein scheme
under noisy standard information. The information model contains separate noisy channels
for the drift coefficient, the diffusion coefficient, the derivative information required in the
Milstein correction, and a smooth Markovian perturbation of the observed Wiener path.

The main theoretical result is the two-sided minimax estimate
\[
e_N^{(r)}\bigl(\mathcal F(\gamma_1,\gamma_2,K),\delta_1,\delta_2,\delta_3\bigr)
\asymp
N^{-\alpha}+\delta_1+\delta_2+\delta_3,
\qquad
\alpha=\min\{\gamma_1+1/2,\gamma_2\},
\]
for \(r\ge2\), up to multiplicative constants depending only on the parameters of the class.
The upper bound follows by combining the exact-information randomized Milstein rate with
a perturbation stability estimate. The lower bound combines known noisy-coefficient lower
bounds with a new Cameron--Martin two-point argument showing that the contribution of
the Wiener-path perturbation level \(\delta_3\) is unavoidable.

The numerical experiments are consistent with this four-term error structure. On an
exactly solvable benchmark they show an approximately linear response to the perturbation
levels, the formation of error floors for fixed noise levels, and the predicted behaviour in
the balanced regime \(\delta_j(n)\asymp n^{-\alpha}\). The additional test based on the
Cameron--Martin shift visualizes the mechanism behind the lower bound proportional to
\(\delta_3\). These experiments should be interpreted as illustrations of the theoretical
estimates rather than as a separate proof of minimax optimality.

\bf Acknowledgements\rm \\
This research turned into supported by the AGH University of Krakow, Poland under grant no. 16.16.420.054, funded by the Polish Ministry of Science and Higher Education. This research was realized as a part of joint research project between AGH University of Krakow and NVIDIA.



\begin{thebibliography}{22}
\bibitem{BaranekEtAl2026}
M. Baranek, A. Ka{\l}u{\.z}a, P. M. Morkisz, P. Przyby{\l}owicz,
M. Sobieraj,
On the randomized Euler algorithm under inexact information,
{\em J. Comput. Appl. Math.} {\bf 476} (2026), 117070.





\bibitem{CohEl}
S.N. Cohen, R.J. Elliott,
\textit{Stochastic Calculus and Applications}, second ed.,
Birkh\"auser, New York, 2015.
\href{https://doi.org/10.1007/978-1-4939-2867-5}{doi:10.1007/978-1-4939-2867-5}.






 


\bibitem{Hein1}
S. Heinrich,
Lower complexity bounds for parametric stochastic It\^o integration,
in: A.B. Owen, P.W. Glynn (eds.),
{\em Monte Carlo and Quasi-Monte Carlo Methods 2016},
Springer Proceedings in Mathematics \& Statistics, vol.~241,
Springer, Cham, 2018, 295--312.





\bibitem{AKPMPP}
A. Ka{\l}u{\.z}a, P. M. Morkisz, P. Przyby{\l}owicz, Optimal approximation of stochastic integrals in analytic noise model, {\em Appl. Math. and Comput.},  {\bf 356} (2019), 74--91.

\bibitem{Kallenberg}
O. Kallenberg,
{\em Foundations of Modern Probability},
Springer, New York, 1997.

\bibitem{KaratzasShreve}
I. Karatzas, S.E. Shreve, {\em Brownian Motion and Stochastic Calculus, second ed.}, Springer-Verlag, New York, 1991.


\bibitem{KRWU} 
R. Kruse, Y. Wu, A randomized Milstein method for stochastic differential equations with non-differentiable drift coefficients, {\em Discrete Contin. Dyn. Syst. Ser B}, {\bf 24} (2019), 3475--3502.




 \bibitem{PMPP14}
 P. M. Morkisz, P. Przyby{\l}owicz, 
 Strong approximation of solutions of stochastic differential equations with time-irregular coefficients via randomized Euler algorithm, {\em Appl. Numer. Math.} {\bf 78} (2014), 80--94.
 
\bibitem{PMPP17}
 P. M. Morkisz, P. Przyby{\l}owicz, 
Optimal pointwise approximation of SDE's from inexact information, {\em Journal of Computational and Applied Mathematics} {\bf 324} \rm (2017), 85--100.

\bibitem{PMPP19}
 P. M. Morkisz, P. Przyby{\l}owicz, Randomized derivative-free Milstein algorithm for efficient approximation of solutions of SDEs under noisy information,  {\em J. Comput. Appl. Math.} {\bf 383} \rm (2021), 1--22.

\bibitem{NOV} E. Novak, \textit{Deterministic and Stochastic Error Bounds in Numerical Analysis}, Lecture Notes in Mathematics, vol. 1349, New York, Springer--Verlag, 1988. 


\bibitem{Protter}
P. Protter, {\em Stochastic Integration and Differential Equations, second ed.}, Springer-Verlag Berlin Heidelberg, 2005.






\bibitem{TWW88}
  J.F. Traub, G.W. Wasilkowski,  H. Wo\'zniakowski,
  {\em Information-Based Complexity}, Academic Press, New York, 1988.

\bibitem{Tsybakov2009}
A.B. Tsybakov, \textit{Introduction to Nonparametric Estimation},
Springer Series in Statistics, Springer, New York, 2009.

\bibitem{Wer96}
  A.G. Werschulz, The complexity of definite elliptic problems with noisy data. {\em J. Complex.} {\bf 12} (1996), 440--473.
  
\bibitem{Wer97}
  A.G. Werschulz, The complexity of indefinite elliptic problems with noisy data. {\em J. Complex.} {\bf 13} (1997), 457--479.

\end{thebibliography}
\end{document}